\definecolor{lgrey}{rgb}{0.8,0.8,0.8 }
\definecolor{dgrey}{rgb}{0.3,0.3,0.3 }
\newcommand{\SSS}{\mathfrak{S}}
\newcommand{\card}[1]{\lvert #1 \rvert}
\newcommand{\N}{\mathbb{N}}
\newcommand{\Av}{\mathfrak{S}}
\newcommand{\sq}{\square}
\DeclareMathOperator{\st}{st}
\theoremstyle{plain}
\newtheorem{theorem}{Theorem}
\newtheorem{lemma}[theorem]{Lemma}
\theoremstyle{definition}
\newtheorem{example}[theorem]{Example}
\theoremstyle{remark}
\newtheorem{remark}[theorem]{Remark}
\title{Ordinal and disjoint sums of partially ordered patterns}
\author[1]{Sucharita Biswas}
\author[2]{Umesh Shankar\thanks{\tt{204093001@iitb.ac.in, umeshshankar@outlook.com}}} 
\author[3]{Sivaramakrishnan Sivasubramanian}
\affil[1,2,3]{Department of Mathematics, Indian Institute of Technology, Bombay Mumbai 400076, India} 
\date{\today}
\begin{document}
\maketitle
\begin{abstract}
Partially ordered patterns (POPs) generalize the classical notion of permutation patterns within the framework of pattern avoidance. Building on recent work by Burstein, Han, Kitaev, and Zhang, which introduced the concept of shape-Wilf-equivalence of sets of patterns, we develop the notions of \emph{ordinal} and \emph{disjoint sums} of labeled posets. This framework enables us to reinterpret their main result as an ordinal sum analogue of the classical theorem by Backelin, West, and Xin. We establish analogous results for disjoint sums of POPs and further extend their results to prove Wilf-equivalence for classes of POPs that include isolated vertices. In particular, we prove the shape-Wilf-equivalence of the sets of patterns $\{123, 213, 312\}$ and $\{132, 231, 321\}$. Our proof strategy involves a bijection that filters through an encoding scheme for the transversals avoiding these patterns.  We use these results to completely classify the partially ordered patterns of size $3,4,5$ whose connected components are all chains. This classification also confirms a conjecture posed by Dimitrov at the problem session of the British Combinatorics Conference 2024 (BCC30).
\end{abstract}
\textbf{\small{}Keyword:}{\small{} partially ordered pattern, shape-Wilf-equivalence, Wilf-equivalence, poset operation, isolated vertex  }{\let\thefootnote\relax\footnotetext{2020 \textit{Mathematics Subject Classification}. Primary: 05A05, 05A15.}}
\section{Introduction}

Let $\N$ denote the set of natural numbers. Let $[n] \coloneqq \{1,\dots,n\}$ and $\SSS_n$ denote the set of permutations of $[n]$. 
An occurrence of a (classical) permutation pattern $p=p_1,p_2,\ldots, p_k$ in a permutation $\pi=\pi_1\dots\pi_n \in \SSS_n$ is a subsequence $\pi_{i_1}\dots\pi_{i_k}$ that is order isomorphic to $p$ (i.e. $\pi_{i_j}<\pi_{i_m}$ if and only if $p_j<p_m$). Permutation patterns are very well studied and have many connections to computer science, algebra, geometry and mathematical biology (see the text by Kitaev \cite{kitaev-patterns-words} and the references therein).   

Partially order patterns were introduced by Kitaev \cite{kitaev-pop} and have been extensively studied in words and compositions (see, for example, \cite{kitaev-segmented-pop}, \cite{kitaev-mansour-pops-kary}, \cite{gao-kitaev-length-4-5-pops}).
A \emph{partially ordered pattern} (POP) $p$ of size $k$ is defined by a $k$-element partially ordered set (poset) $P$ labeled by the elements in $\{1,\dots, k\}$. An occurrence of such a POP $p$ in a permutation $\pi = \pi_1\dots\pi_n$ is a subsequence $\pi_{i_1}\dots \pi_{i_k}$, where $1 \le i_1 <\dots < i_k \le n$, such that $\pi_{i_j} < \pi_{i_m}$ if and only if $j < m$ in $P$. We will use the word ``pattern'' to mean a classical or partially ordered pattern. A permutation $\pi$ is said to avoid a pattern $p$ if it does not contain any occurrences of the pattern $p$. We denote by $\SSS_n(p)$ the set of permutations in $\SSS_n$ that avoid the pattern $p$. We say that two pattern $p,q$ are \emph{Wilf-equivalent}, denoted by $p \sim q$, if $\card{\SSS_n(p)}=\card{\SSS_n(q)}$ for all $n\ge 1$.
Alternatively, from the work of Bean et al \cite{bean-bipartite-pop}, one can define the avoidance of a POP $p$ as a simultaneous avoidance of the set of patterns $S$ that present as (group-theoretic) inverses of linear extensions of $p$. Therefore, we will not distinguish between a POP and the set of patterns that generate $\SSS_n(p)$.

In the following, we review the definition of  the notion of shape-Wilf-equivalence for POPs. Let us first review some terminology related to pattern avoiding transversals. Throughout the paper, we draw Ferrers boards in French notation (rows increasing in length from top to bottom), and number columns from left to right and rows from bottom to top. We use the square $(i,j)$ to represent the square located in row $i$ and column $j$. A transversal of a Ferrers board $\lambda= (\lambda_1,\lambda_2,\dots,\lambda_n)$ with $\lambda_1 \ge \lambda_2 \ge \dots \ge \lambda_n > 0$ is a $01$-filling of the squares of $\lambda$ with $1$'s and $0$'s such that every row and every column contains exactly one $1$. A permutation $\pi=\pi_1\pi_2\dots\pi_n$ can be regarded as a transversal of the $n$ by $n$ square diagram, in which the square $(\pi_i,i)$ is filled with a $1$ for all $1 \le i\le n$ and all the other squares are filled with $0$'s. The transversal corresponding to the permutation $\pi$ is also called the permutation matrix
of $\pi$. Let $\SSS_\lambda$ denote the set of transversals of the Ferrers board $\lambda$.

For example, let us consider the Ferrers board $\lambda=(5,5,4,3,3)$ and the permutation $\pi=4 5 2 3 1$. Then $\pi$ can be regarded as the transversal 
\begin{center}
\ytableausetup{centertableaux} \begin{ytableau}
    $0$ & 1 & 0\\
     1 & 0 & 0  \\
    0 & 0 & 0 & 1 \\
    0 & 0 & 1 & 0 & 0\\
    0 & 0 & 0 & 0 & 1\\
\end{ytableau}.
\end{center}
\vspace{.1 in}

Given a permutation $\alpha$ of $\SSS_m$, let $M$ be its permutation matrix. A transversal $T$
of a Ferrers board $\lambda$ with $n$ columns and $n$ rows will be said to contain the pattern
$\alpha$ if there exists two subsets of the index set $[n]$, namely, $R= \{r_1,r_2,\dots,r_m\}$ and
$C= \{c_1,c_2,\dots,c_m\}$, such that the matrix $M'$ on the set $R$ of rows and the set $C$
of columns is a copy of M and each of the squares $(r_i,c_j)$ falls within the Ferrers board. In this context, we say that the matrix $M'$ is isomorphic to $\alpha$ or the matrix
$M'$ is an occurrence of $\alpha$. Otherwise, we say that $T$ avoids the pattern $\alpha$ and $T$
is $\alpha$-avoiding. Given a set $P$ of patterns, let $\SSS_\lambda(P)$ denote the set of transversals of the Ferrers board $\lambda$ that
avoid each pattern in $P$. Given two sets $P$ and $Q$ of patterns, we say that $P$ and $Q$ are shape-Wilf-equivalent if $\card{\SSS_\lambda(P)}=\card{\SSS_\lambda(Q)}$ holds for any Ferrers board $\lambda$. In this context, we write $P \sim_s Q$. If $\{\sigma\}\sim_s \{\tau\}$, we simply write $\sigma\sim_s \tau$. Clearly, the shape-Wilf-equivalence would imply the Wilf-equivalence.

We define the ordinal sum of two labeled posets. First, we recall the ordinal sum of two posets. Let $p=(T,\le_p)$ and $q=(T',\le_q)$ be two posets. The \emph{ordinal sum} $p\oplus q$ is the relation on $T\bigsqcup T'$ that satisfies 
\begin{enumerate}
\item  if $x,y \in T$ such that $x\le_p y$, then $x\le y$ in $p\oplus q$; 
\item  if $x,y\in T'$ such that $x\le_q y$, then $x\le y$ in $p\oplus q$; 
\item  if $x\in T$ and $y\in T'$, then $x\le y$ in $p\oplus q$.
\end{enumerate}

To extend this notion to labeled posets, we specify what happens to the labels of $p$, $q$ in $p\oplus q$. The labels $L_p \subset \mathbb Z_{>0}$ of the poset $T$ in $p\oplus q$ are unchanged while the labels $L_q \subset \mathbb{Z}_{\ge 0}$ of  the poset $T'$ in $p \oplus q$ are all raised by $\max(L_p)$ , that is, $i$ is replaced by $i+\max(L_p)$ for all $i\in L_q$.  
In this work, the labeled posets will have labels from $\mathbb{Z}_{>0}$ and no two vertices will have the same label.

For example, if 
$p=$\begin{tikzpicture}[baseline=(current bounding box)]
		\filldraw  (0,.2) circle (2pt);
        \filldraw  (0,-.2) circle (2pt);
        
		\draw (0,.2) -- (0,-.2); 
        
        \node[] at (0,.5)   { $1$};
		\node[] at (0,-.5)  { $2$};
		
\end{tikzpicture}  and $q=\begin{tikzpicture}[baseline=(current bounding box)]
		\filldraw  (0,.2) circle (2pt);
        \filldraw  (-.2,-.2) circle (2pt);
        \filldraw  (.2,-.2) circle (2pt);
		
		\draw (0,.2) -- (-.2,-.2); 
        \draw (0,0.2) -- (0.2,-.2); 
        \node[] at (0,.5)   { $1$};
		\node[] at (-.3,-.5)  { $2$};
		\node[] at (.3,-.5) { $3$};
        
\end{tikzpicture}$, then the ordinal sum of $p$ and $q$ is $p\oplus q=
\begin{tikzpicture}[baseline=(current bounding box)]
        \filldraw  (0,0) circle (2pt);
		\filldraw  (0.4,.4) circle (2pt);
        \filldraw  (-.4,.4) circle (2pt);
        \filldraw  (0,.8) circle (2pt);
        \filldraw  (0,-0.4) circle (2pt);
        
		\draw (0.4,.4) -- (0,0); 
        \draw (-.4,.4) -- (0,0); 
        \draw (0.4,.4) -- (0,.8); 
        \draw (-.4,0.4) -- (0,.8); 
        \draw (0,-0.4) -- (0,0); 
        
        \node[] at (0.3,0)   { $1$};
		\node[] at (-.65,.4)  { $4$};
		\node[] at (.65,.4) { $5$};
        \node[] at (0,1.1)   { $3$};
        \node[] at (0,-.7)   { $2$};
        
\end{tikzpicture}.$

Note that for a POP $p$ on $k$ elements, the set of labels of $p$ will be $\{1,2,\dots, k\}$.
In this language, Theorem $2.2$ of \cite{burstein-shape-wilf} can be recast as follows.

\begin{theorem}{\cite[Theorem 2.2]{burstein-shape-wilf}}\label{thm: ordinal sum}
    Let $p, p', q$ be POPs with $p \sim_s p'$ (i.e. $p$ is shape-Wilf equivalent
    to $p'$). Then, 
    the POPs $p \oplus q$ and $p' \oplus q$ are shape-Wilf-equivalent i.e. $p\oplus q \sim_s p' \oplus q$. 
\end{theorem}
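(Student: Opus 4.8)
The plan is to reduce the shape-Wilf-equivalence of $p \oplus q$ and $p' \oplus q$ on an arbitrary Ferrers board $\lambda$ to the hypothesis $p \sim_s p'$ applied to certain sub-boards. First I would fix a Ferrers board $\lambda$ with $n$ rows and $n$ columns and analyze the structure of a transversal $T \in \SSS_\lambda(p \oplus q)$. The key structural observation is that in the ordinal sum $p \oplus q$, every element of (the copy of) $p$ lies strictly below every element of (the copy of) $q$; by the definition of containment of a POP in a transversal, an occurrence of $p \oplus q$ is an occurrence of $q$ sitting (in both row-values and column-positions) entirely above-and-to-the-right of an occurrence of $p$, with the two occurrences order-compatible. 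So avoiding $p \oplus q$ means: there is no occurrence of $q$ such that the "lower-left rectangle" determined by its minimal row and minimal column contains an occurrence of $p$.

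Next I would set up the standard decomposition used in this circle of ideas: locate the occurrences of $q$ in $T$, and for each square $(i,j)$ of $\lambda$ that is the "corner" of a potential $q$-occurrence using only rows $\ge i$ and columns $\ge j$, consider the staircase-shaped region of squares strictly below row $i$ and strictly left of column $j$ that must then be forced to avoid $p$. The combinatorics here is exactly the "first occurrence of $q$" or "canonical $q$-occurrence" bookkeeping: one partitions $\SSS_\lambda(p\oplus q)$ according to which squares are occupied by the $1$'s participating in the (appropriately chosen canonical) copy of $q$, and the remaining $1$'s in the complementary lower-left Ferrers sub-board $\mu$ must form a transversal of $\mu$ avoiding $p$. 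This is where I would invoke $p \sim_s p'$: the number of $p$-avoiding transversals of $\mu$ equals the number of $p'$-avoiding transversals of $\mu$, for every sub-board $\mu$ that arises, so summing over the partition gives $\card{\SSS_\lambda(p\oplus q)} = \card{\SSS_\lambda(p'\oplus q)}$. Equivalently one can phrase this as a bijection $\SSS_\lambda(p \oplus q) \to \SSS_\lambda(p' \oplus q)$ that fixes the canonical $q$-part and applies the shape-Wilf bijection for $p \sim_s p'$ on the sub-board.

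The main obstacle I expect is making the decomposition genuinely well-defined and bijective: the region that must avoid $p$ depends on which copy of $q$ one selects, and copies of $q$ can overlap and interact, so one must choose a canonical $q$-occurrence (e.g. lexicographically minimal in columns, then rows, among all $q$-occurrences) and verify that (i) after deleting the $1$'s of this canonical copy the rest really does lie in a Ferrers sub-board, (ii) the remaining filling is forced to avoid $p$ and conversely any $p$-avoiding filling of that sub-board recombines with the fixed $q$-part to give a $(p\oplus q)$-avoiding transversal, and (iii) the canonicity is preserved under the shape-Wilf bijection on the sub-board — that is, replacing the $p$-avoiding part by a $p'$-avoiding part does not create a new, "earlier" copy of $q$. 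Point (iii) is the delicate one; it should follow because the shape-Wilf bijection operates entirely within the lower-left sub-board whose entries are all smaller (in value) and earlier (in column) than the fixed $q$-part, so no new $q$-occurrence using the moved entries together with the fixed part can be more canonical than the one we fixed. Once these points are nailed down, the theorem follows by summing over the (finite) set of possible canonical $q$-configurations.
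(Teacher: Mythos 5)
The paper does not prove this statement itself --- it is imported from Burstein--Han--Kitaev--Zhang --- but the technique behind it (and behind the paper's own Theorem~\ref{thm: main1-shape}, whose proof is a variant) is the Backelin--West--Xin white/gray colouring. Your opening structural observation is correct: an occurrence of $p\oplus q$ is an occurrence of $q$ lying entirely above and to the right of an occurrence of $p$. The gap is in the execution. Avoiding $p\oplus q$ is a condition on \emph{all} occurrences of $q$ simultaneously: for every occurrence of $q$, the rectangle strictly below its minimal row and strictly left of its minimal column must avoid $p$. These rectangles are pairwise incomparable in general (a copy of $q$ far to the right with a small minimal value forces a tall narrow rectangle; a copy further left with a large minimal value forces a short wide one), and no single ``canonical'' copy dominates the others. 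Consequently your partition by a canonical $q$-occurrence does not characterize $\SSS_\lambda(p\oplus q)$: a filling whose canonical rectangle avoids $p$ can still contain $p\oplus q$ through a different copy of $q$, so the ``conversely'' half of your point~(ii) fails and the summation over canonical configurations does not close. (A lesser issue: the $1$'s lying inside a single lower-left rectangle do not form a transversal of that rectangle, so the hypothesis $p\sim_s p'$ cannot be applied to it directly.)

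The repair is precisely the per-cell condition you state in your second paragraph before switching to the canonical copy: colour a cell white iff some occurrence of $q$ lies entirely to its north-east. The white cells form a lower-left-closed region; after greying the rows and columns whose $1$ is gray, the remaining white cells form a Ferrers sub-board carrying an induced transversal, and $T$ avoids $p\oplus q$ if and only if that single induced transversal avoids $p$. Your delicate point~(iii) then becomes the claim that the colouring is unchanged when the white transversal is replaced, and this is where the real lemma lives: any occurrence of $q$ north-east of a cell can be replaced by one using only gray $1$'s (if it uses a white $1$, that $1$ has its own copy of $q$ to its north-east, and an extremal choice terminates the process), so the colouring depends only on the gray part and is invariant under the substitution. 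With that in place, applying the bijection furnished by $p\sim_s p'$ to the white sub-board gives the desired bijection $\SSS_\lambda(p\oplus q)\to\SSS_\lambda(p'\oplus q)$.
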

The result of Burstein et al. is a POP analogue of the famous result of Backelin, West and Xin \cite[Proposition 2.3]{bwx-main}.
 Our first main result of this work is a Wilf-equivalence analogue of Theorem \ref{thm: ordinal sum} for the case when there are isolated vertices present.

\begin{theorem}
\label{thm: main1-shape}
    Let $p,p'$ be two POPs of size $k$ with isolated vertices $I=\{i_1< i_2<\dots< i_s\}$  such that the vertices labeled $[i_1,k]\backslash I$ are greater than the vertices labeled $[i_1-1]$, that is, if $x \in [i_1,k]\backslash I$ and $y \in [i_1-1]$, then $x>_p y$ and $x >_{p'}y$ . Let $I_p,I_{p'}$ be the subposets induced by $[i_1-1]$ and $J_p,J_{p'}$ be the subposets induced by $[i_1,k]\backslash I$. If $I_p\sim_s I_{p'}$ and $J_p=J_{p'}$, then $p\sim p'$.
\end{theorem}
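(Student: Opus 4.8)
The plan is to peel off the isolated vertices by localizing the $I_p$-avoidance to a Ferrers sub-board of each permutation matrix and transporting that sub-board through the shape-Wilf-equivalence $I_p\sim_s I_{p'}$. First, the degenerate cases. If $i_1=1$ then $I_p=I_{p'}=\emptyset$, and since $J_p=J_{p'}$ and the isolated-vertex set is common, $p$ and $p'$ are the same labeled poset, so $p=p'$. If $[i_1,k]\setminus I=\emptyset$ (so $J_p=\emptyset$ and $I=\{i_1,\dots,k\}$), then $\pi\in\SSS_n$ avoids $p$ exactly when the restriction of $\pi$ to its $n-s$ leftmost columns avoids $I_p$ (the $s$ isolated vertices only need $s$ further columns, in arbitrary rows, to the right of an $I_p$-occurrence); counting completions yields $\card{\SSS_n(p)}=\frac{n!}{(n-s)!}\card{\SSS_{(n-s)\times(n-s)}(I_p)}$, and $I_p\sim_s I_{p'}$ closes this case. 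So assume $i_1\ge 2$ and $J_p\ne\emptyset$. Let $\tilde p$ (resp.\ $\tilde p'$) be the size-$(k-s)$ POP obtained from $p$ (resp.\ $p'$) by deleting the isolated vertices and relabeling order-isomorphically; then $\tilde p=I_p\oplus J_p$ and $\tilde p'=I_{p'}\oplus J_{p'}$, so Theorem~\ref{thm: ordinal sum}, applied with bottom $I_p\sim_s I_{p'}$ and top $q=J_p=J_{p'}$, gives $\tilde p\sim_s\tilde p'$. This ordinal-sum shadow motivates but does not directly yield the statement; the bijection below refines it.

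View $\pi\in\SSS_n$ as an $n\times n$ transversal, with $(i,j)$ the cell in row $i$, column $j$. An occurrence of $p$ is an $I_p$-occurrence together with a $J_p$-occurrence lying strictly above it (in rows, since every $J_p$-vertex exceeds every $I_p$-vertex) and sufficiently to its right (in columns, leaving spare columns for the $s$ isolated vertices). Call a $J_p$-occurrence of $\pi$ \emph{eligible} if the spare-column conditions forced by the isolated vertices whose labels lie among or past the $J_p$-labels are satisfied; for an eligible $J_p$-occurrence $O$ with least column $\beta$ and least row $v$, set $\rho(O)=\{(i,j): 1\le i\le v-1,\ 1\le j\le \beta-1-d\}$, where $d\ge 1$ is the number of isolated vertices with labels strictly between the $I_p$-labels and the first $J_p$-label, and put $D(\pi)=\bigcup_{O\ \mathrm{eligible}}\rho(O)$, a lower-left-closed cell set, hence a Ferrers board. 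The structural lemma: $\pi$ avoids $p$ if and only if the restriction of $\pi$ to the cells of $D(\pi)$ avoids $I_p$. One direction is immediate --- if $\pi$ contains $p$, the $I_p$-part of an occurrence is an $I_p$-occurrence inside $\rho(O)\subseteq D(\pi)$ for the attached eligible $O$. The reverse direction --- that an $I_p$-occurrence lying in the union $D(\pi)$ can be chosen inside a single rectangle $\rho(O)$, equivalently that the northeast corner of its bounding box already lies in $D(\pi)$ --- is the crux, and is where the hypothesis that $J_p$ sits entirely above $I_p$ is essential: it forces any $I_p$-occurrence capable of completing to a $p$-occurrence to lie within the low rows beneath a common $J_p$-occurrence. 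I expect this to be the main obstacle; it may require a more careful choice of $D(\pi)$ than the naive union, or a separate combinatorial argument excluding ``straddling'' $I_p$-occurrences.

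Granting the lemma, one checks the companion fact that resampling $\pi$ inside $D(\pi)$ leaves $D(\pi)$ fixed: if an eligible $J_p$-occurrence $O$ with maximal $\rho(O)$ had a cell inside $D(\pi)$, that cell would exhibit an eligible $J_p$-occurrence whose corner strictly dominates the corner of $O$ coordinatewise, contradicting maximality; and any eligible $J_p$-occurrence of the resampled matrix either uses only cells outside $D(\pi)$ (so was already a generator) or uses a cell of $D(\pi)$ (so its rectangle sits inside the old $D(\pi)$). Hence $D(\pi)$ depends only on $\pi$ restricted to the complement of $D(\pi)$. Now define $\Theta:\SSS_n(p)\to\SSS_n(p')$: the restriction $\pi|_{D(\pi)}$ is a partial permutation matrix supported on certain rows and columns of the Ferrers board $D(\pi)$ which, by the lemma, avoids $I_p$; using the standard extension of shape-Wilf-equivalence to partial fillings of a Ferrers board with prescribed row- and column-supports (obtained by padding to a full transversal of a larger board), replace $\pi|_{D(\pi)}$ by an $I_{p'}$-avoiding partial filling on the same supports, and keep every entry outside $D(\pi)$ unchanged. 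The result $\Theta(\pi)$ is again an $n\times n$ transversal, $D(\Theta(\pi))=D(\pi)$ by the companion fact, and $\Theta(\pi)|_{D(\pi)}$ avoids $I_{p'}$, so the $p'$-analogue of the lemma gives $\Theta(\pi)\in\SSS_n(p')$; since $\Theta$ is visibly invertible, $\card{\SSS_n(p)}=\card{\SSS_n(p')}$ for all $n$, i.e.\ $p\sim p'$. For general $s$ only the bookkeeping changes: the isolated vertices impose a staircase of gap-shifts rather than the single shift $d$, altering the shape of the rectangles $\rho(O)$ but not the structure of the argument.
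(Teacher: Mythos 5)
Your construction is, up to notation, the paper's own proof (the Backelin--West--Xin colouring argument): colour a cell according to whether an essential $J_p$-occurrence lies strictly above and to its right, observe that the favourable cells form a lower-left-closed region, i.e.\ a Ferrers board, and transport the induced filling through the shape-Wilf bijection for $I_p\sim_s I_{p'}$. The one place where you stop short --- the ``crux'' that an $I_p$-occurrence whose $1$'s all lie in $D(\pi)$ might have the northeast corner of its bounding box outside $D(\pi)$, which you flag as the main obstacle and leave unresolved --- is not actually an obstacle, and seeing why is the missing idea. The lemma you need is not ``the $1$'s of $\pi$ inside $D(\pi)$ contain no $I_p$-pattern'' but ``the transversal induced on the Ferrers board $D(\pi)$ avoids $I_p$ in the shape-Wilf sense,'' and by the very definition of containment in a Ferrers board (every cell $(r_i,c_j)$, equivalently the northeast corner of the bounding box, must lie inside the board) a straddling occurrence is simply not an occurrence of $I_p$ in $D(\pi)$. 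Nor does it extend to an occurrence of $p$, since the $J_p$-part together with the spare columns for the isolated vertices would have to sit above and to the right of that same corner. So the two notions match exactly, the lemma is immediate, and no ``more careful choice of $D(\pi)$'' or exclusion argument is required --- only the correct formulation, which is in any case the only one to which the hypothesis $I_p\sim_s I_{p'}$ applies.

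A second, smaller repair: you cannot invoke a ``standard extension of shape-Wilf-equivalence to partial fillings with prescribed row- and column-supports'' obtained by ``padding to a full transversal of a larger board''; no such extension has been established, and padding goes in the wrong direction. The standard fix, and the one the paper uses, is to delete the rows and columns whose $1$ lies outside the white region and bottom-left justify what remains; this yields a genuine transversal of a (smaller) Ferrers board, to which the bijection furnished by $I_p\sim_s I_{p'}$ applies directly, and the deleted rows and columns are restored afterwards. Your ``companion fact'' that the region is unchanged by resampling is correct, and your maximality/iteration sketch is essentially the right argument (one iterates to an eligible $J_p$-occurrence supported entirely on unmoved $1$'s). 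With these two repairs your outline closes up into the paper's proof.
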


To illustrate this theorem, consider the POPs $p=$\begin{tikzpicture}[baseline=(current bounding box)]
		\filldraw  (0,.9) circle (2pt);
        \filldraw  (0,.3) circle (2pt);
        \filldraw  (0,-.3) circle (2pt);
        \filldraw  (0,-.9) circle (2pt);
        \filldraw  (.6,0) circle (2pt);
		
		\draw (0,.9) -- (0,.3); 
        \draw (0,0.3) -- (0,-.3); 
        \draw (0,-0.3) -- (0,-.9); 
        \node[] at (-.2,0.9)  {$5$};
		\node[] at (-.2,.3)  { $1$};
		\node[] at (-.2,-.3)  {$2$};
        \node[] at (-.2,-.9)  {$3$};
        \node[] at (.8,0)    { $4$};
		
	\end{tikzpicture} and $p'=$\begin{tikzpicture}[baseline=(current bounding box)]
		\filldraw  (0,.9) circle (2pt);
        \filldraw  (0,.3) circle (2pt);
        \filldraw  (0,-.3) circle (2pt);
        \filldraw  (0,-.9) circle (2pt);
        \filldraw  (.6,0) circle (2pt);
		
		\draw (0,.9) -- (0,.3); 
        \draw (0,0.3) -- (0,-.3); 
        \draw (0,-0.3) -- (0,-.9); 
        \node[] at (-.2,0.9)  {$5$};
		\node[] at (-.2,.3)  { $3$};
		\node[] at (-.2,-.3)  {$2$};
        \node[] at (-.2,-.9)  {$1$};
        \node[] at (.8,0)    { $4$};
		
	\end{tikzpicture}. Here $I=\{4\}$, $I_p=$\begin{tikzpicture}[baseline=(current bounding box)]
    \filldraw  (0,0) circle (2pt);
    \filldraw  (0,-.5) circle (2pt);
    \filldraw  (0,0.5) circle (2pt);
    \draw (0,0) -- (0,-.5); 
    \draw (0,0) -- (0,.5);
    \node[] at (0.3,0) {$2$};
    \node[] at (0.3,.5)   { $1$};
    \node[] at (0.3,-.5)   { $3$};
\end{tikzpicture}, $I_{p'}=$\begin{tikzpicture}[baseline=(current bounding box)]
    \filldraw  (0,0) circle (2pt);
    \filldraw  (0,-.5) circle (2pt);
    \filldraw  (0,0.5) circle (2pt);
    \draw (0,0) -- (0,-.5); 
    \draw (0,0) -- (0,.5);
    \node[] at (0.3,0) {$2$};
    \node[] at (0.3,.5)   { $3$};
    \node[] at (0.3,-.5)   { $1$};
\end{tikzpicture} and $J_p=J_{p'}= $ \begin{tikzpicture}[baseline=(current bounding box)]
    \filldraw  (0,-0.2) circle (2pt);
    \node[] at (0.3,-0.2) {$5$};
\end{tikzpicture}. As $I_p \sim_s I_{p'}$ then we have $p \sim p'$.



Our second main result is the Wilf-equivalence of certain POPs with a specific form.

		
		
		
		

\begin{theorem}
\label{thm: main2-wilf-3size}
    Let $p,p'$ be two POPs of size $k$ with the same isolated vertex labels $I$ such that $k-3\notin I, k-1\in I$ and  the vertices labeled $k-2,k$ are greater (in $p,p'$) than the vertices labeled $[k-3]\backslash I$, that is, if $x \in [k-3]\backslash I$, then $k-2,k \geq_p x$ and $k-2,k \geq_p' x$.. Let $I_p, I_{p'}$ be the subposets induced by the vertices labeled $[k-3]\backslash I$ and $J_p,J_{p'}$ be the subposets induced by $[k-2,k]\backslash I$ in $p,p'$ respectively. 
    If $I_p=I_{p'}$ and $$J_p = \begin{tikzpicture}[baseline=(current bounding box)]
		\filldraw  (0,0) circle (2pt); 
		\filldraw   (0,.5) circle (2pt);
        		\draw (0,0) -- (0,.5); 
        \node[] at (0,.75)  { $k-2$};
		\node[] at (0,-.3)  { $k$};
\end{tikzpicture}, \hspace{1cm} J_{p'}=\begin{tikzpicture}[baseline=(current bounding box)]
		\filldraw  (0,0) circle (2pt); 
		\filldraw   (0,.5) circle (2pt);
        		\draw (0,0) -- (0,.5); 
        \node[] at (0,.8)  { $k$};
		\node[] at (0,-.25)  { $k-2$};
\end{tikzpicture},$$  then $p \sim p'$.
\end{theorem}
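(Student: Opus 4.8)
The plan is to first translate the POP-avoidance conditions into statements about permutations, and then to construct a bijection $\SSS_n(p)\to\SSS_n(p')$ which is the identity on the part of the permutation that can participate in an $I_p$-occurrence and is a reversal-type operation on the remaining ``high'' part. Write $L=[k-3]\setminus I$. Unwinding the definition of containment -- and using that $k-1$, together with the labels in $I\cap[k-3]$, are incomparable to everything in $p$ and in $p'$ -- a permutation $\pi$ contains $p$ precisely when there are positions $q_1<\dots<q_k$ such that the entries at the positions indexed by $L$ form an occurrence of the POP $I_p$, both $\pi_{q_{k-2}}$ and $\pi_{q_k}$ exceed every entry of that occurrence, $\pi_{q_{k-2}}>\pi_{q_k}$, and the entries at the positions indexed by $I$ are unrestricted. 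The condition for $p'$ is identical except that $\pi_{q_{k-2}}<\pi_{q_k}$. So the only difference between the two avoidance conditions is the relative order of the two ``large'' entries placed above an $I_p$-occurrence, with $q_{k-1}$ serving only as a free spacer between $q_{k-2}$ and $q_k$. Since the wildcard positions indexed by $I$ play the same role for $p$ and for $p'$, they can be carried through the argument verbatim, and it is harmless to picture the case $I=\{k-1\}$, so that $L=[k-3]$.

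Concretely, $\pi$ avoids $p$ iff for every occurrence $O$ of $I_p$, say on positions $a_1<\dots<a_{k-3}$ with largest entry $M_O$, the entries of $\pi$ lying to the right of $a_{k-3}$ and larger than $M_O$ contain no pair $x<z$ with $\pi_x>\pi_z$ and with a position strictly between $x$ and $z$; avoiding $p'$ is the same with $\pi_x>\pi_z$ replaced by $\pi_x<\pi_z$. This is exactly the size-$3$ phenomenon: there $p$ and $p'$ collapse to the classical sets $\{231,312,321\}$ and $\{123,132,213\}$, which are interchanged by reversal of the permutation. For general $k$ the bijection should carry out the analogous flip ``above'' the relevant $I_p$-occurrence: one extracts from $\pi$ a canonical threshold $(s,M)$ that isolates the region on which the extra avoidance condition is actually binding, applies a reversal-type operation to the entries of $\pi$ lying to the right of $s$ and greater than $M$, and leaves every other entry of $\pi$ in place. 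Since $I_p=I_{p'}$, fixing the low/left part preserves the $I_p$-structure, while the operation on the high block converts the forbidden ``descending high pair with a gap'' configurations into ``ascending high pair with a gap'' configurations and conversely; invertibility follows once one checks that $(s,M)$ is recoverable from the image, the free spacer coming from the vertex $k-1$ being precisely what keeps the ``gap'' requirement symmetric under the operation.

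The step I expect to be the main obstacle is making the threshold genuinely canonical and the operation well defined: a given $\pi\in\SSS_n(p)$ carries many occurrences of $I_p$, each suggesting its own ``high'' block, so one must show either that a single extremal occurrence controls the entire condition, or that the blocks attached to different occurrences nest compatibly so that one operation handles all of them at once. This is where the interplay between $I_p$ and the two-element top chain is delicate, and where most of the verification (well-definedness, injectivity, and surjectivity onto $\SSS_n(p')$) will lie. Should this direct bijection prove unwieldy, I would fall back on an induction on $n$: delete the entry of value $n$ from $\pi$, observe that $n$ can participate in a $p$-occurrence only as $\pi_{q_{k-2}}$ or as the spacer $\pi_{q_{k-1}}$ (and, for $p'$, only as $\pi_{q_k}$ or as the spacer), and run the resulting case analysis, invoking Theorem~\ref{thm: ordinal sum} and Theorem~\ref{thm: main1-shape} for the lower-order pieces, to force $\card{\SSS_n(p)}=\card{\SSS_n(p')}$ by induction, with the isolated vertex $k-1$ keeping the bookkeeping parallel on the two sides.
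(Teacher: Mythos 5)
Your reformulation of the avoidance conditions is accurate, and it isolates the key combinatorial fact the paper also relies on: in a $p$-avoiding permutation, any two ``high'' entries completing an $I_p$-occurrence in the forbidden relative order must occupy adjacent positions, because the isolated vertex $k-1$ only asks for a spacer position between them. But the construction you then propose --- choose one canonical threshold $(s,M)$ and apply a reversal-type operation to the entries right of $s$ and above $M$ --- has a genuine gap, and it is not only the canonicity issue you flag. First, the binding region is not cut out by a single position/value threshold: which entries count as ``high'' depends on which $I_p$-occurrence lies below and to their left, and already for $I_p$ a single $2$-chain, different occurrences impose different value thresholds at different positions, so no single pair $(s,M)$ controls the condition. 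Second, and more seriously, reversing the values on a scattered set of positions does not exchange ``descending high pair with a gap'' and ``ascending high pair with a gap'': the gap condition refers to absolute positions (is there \emph{any} position strictly between the two?), and after reversing the value sequence the pair of positions carrying a given pair of values changes, so gapped pairs and adjacent pairs are not matched up. The size-$3$ case is misleading here because there the entire permutation is reversed, so positions move coherently with values; that coherence is lost once the low part must stay fixed.

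The paper supplies the mechanism your sketch is missing, in two steps. It first proves (Lemma~\ref{lem: from-west}) the ``no spacer'' version of the statement, where the two top labels are adjacent in the poset labelling and no gap is required, via a West-style bijection: entries of rank at most $k-2$ are frozen in place and the rank-$(k-1)$ entries are greedily redistributed among their positions. It then reduces Theorem~\ref{thm: main2-wilf-3size} to that lemma using exactly the adjacency observation: with $q=\st(p\setminus\{k-1\})$, every entry of top $q$-rank in a $p$-avoider has its partner immediately to its left, so multiplying by the adjacent transpositions $s_{i_1-1}\cdots s_{i_k-1}$ kills all occurrences of $q$, one applies the bijection of Lemma~\ref{lem: from-west} to pass from $q$-avoiders to $q'$-avoiders, and then reapplies the transpositions. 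That swap-in/apply/swap-out step is what you would need to add to make your plan work; the fallback induction on $n$ you mention is too undeveloped to assess.
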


We illustrate Theorem \ref{thm: main2-wilf-3size} through an example. Consider the POPs $p=$\begin{tikzpicture}[baseline=(current bounding box)]
		\filldraw  (0,.9) circle (2pt);
        \filldraw  (0,.3) circle (2pt);
        \filldraw  (0,-.3) circle (2pt);
        \filldraw  (0,-.9) circle (2pt);
        \filldraw  (.6,0) circle (2pt);
		
		\draw (0,.9) -- (0,.3); 
        \draw (0,0.3) -- (0,-.3); 
        \draw (0,-0.3) -- (0,-.9); 
        \node[] at (-.2,0.9)  {$3$};
		\node[] at (-.2,.3)  { $5$};
		\node[] at (-.2,-.3)  {$1$};
        \node[] at (-.2,-.9)  {$2$};
        \node[] at (.8,0)    { $4$};
		
	\end{tikzpicture} and $p'=$\begin{tikzpicture}[baseline=(current bounding box)]
		\filldraw  (0,.9) circle (2pt);
        \filldraw  (0,.3) circle (2pt);
        \filldraw  (0,-.3) circle (2pt);
        \filldraw  (0,-.9) circle (2pt);
        \filldraw  (.6,0) circle (2pt);
		
		\draw (0,.9) -- (0,.3); 
        \draw (0,0.3) -- (0,-.3); 
        \draw (0,-0.3) -- (0,-.9); 
        \node[] at (-.2,0.9)  {$5$};
		\node[] at (-.2,.3)  { $3$};
		\node[] at (-.2,-.3)  {$1$};
        \node[] at (-.2,-.9)  {$2$};
        \node[] at (.8,0)    { $4$};
		
	\end{tikzpicture}. Here $I=\{4\}$, $I_p=I_{p'}= \begin{tikzpicture}[baseline=(current bounding box)]
		\filldraw  (0,0) circle (2pt); 
		\filldraw   (0,.5) circle (2pt);
        		\draw (0,0) -- (0,.5); 
        \node[] at (.2,.7)  { $1$};
		\node[] at (.2,-.2)  { $2$};
\end{tikzpicture}$. Hence, by Theorem \ref{thm: main2-wilf-3size}, we get Wilf-equivalence of $p$ and $p'$.

We give the proof of Theorems \ref{thm: main1-shape} and \ref{thm: main2-wilf-3size} in Section \ref{sec: proofs}. Similar to the study of the effect of ordinal sums on pattern avoidance, we study the effect of another poset operation on pattern avoidance. Given $p=(T,\le_p)$ and $q=(T',\le_q)$, the \emph{disjoint sum} $p+q$ is the relation on $T\bigsqcup T'$ defined by
\begin{enumerate}
    \item if $x,y\in T$ such that $x\le_p y$, then $x\le y$ in $p+q$;
    \item if $x,y$ in $T'$ such that $x\le_q y$, then $x\le y$  in $p+q$.
\end{enumerate}

To extend this to labeled posets, we specify how the labels are handled. The labels $L_p$ in $T$ are untouched, while the labels $L_q$ of $T'$ in $p+q$ are all raised by $\max(L_p)$.

For instance, let $p=$\begin{tikzpicture}[baseline=(current bounding box)]
		\filldraw  (0,.2) circle (2pt);
        \filldraw  (0,-.2) circle (2pt);
        
		\draw (0,.2) -- (0,-.2); 
        
        \node[] at (0,.5)   { $1$};
		\node[] at (0,-.5)  { $2$};
		
\end{tikzpicture} and $q=\begin{tikzpicture}[baseline=(current bounding box)]
		\filldraw  (0,.2) circle (2pt);
        \filldraw  (-.2,-.2) circle (2pt);
        \filldraw  (.2,-.2) circle (2pt);
		
		\draw (0,.2) -- (-.2,-.2); 
        \draw (0,0.2) -- (0.2,-.2); 
        \node[] at (0,.5)   { $1$};
		\node[] at (-.3,-.5)  { $2$};
		\node[] at (.3,-.5) { $3$};
        
\end{tikzpicture}$. The disjoint sum of $p$ and $q$ is $p+q= \begin{tikzpicture}[baseline=(current bounding box)]
		\filldraw  (0,.2) circle (2pt);
        \filldraw  (-.2,-.2) circle (2pt);
        \filldraw  (.2,-.2) circle (2pt);
        \filldraw  (-.7,0.2) circle (2pt);
        \filldraw  (-.7,-.2) circle (2pt);
		
		\draw (0,.2) -- (-.2,-.2); 
        \draw (0,0.2) -- (0.2,-.2); 
        \draw (-.7,0.2) -- (-.7,-.2); 
        \node[] at (0,0.5)   { $3$};
		\node[] at (-.7,-.5)  { $2$};
		\node[] at (.35,-.5) { $5$};
        \node[] at (-.7,.5)   { $1$};
        \node[] at (-.35,-.5)   { $4$};
        
\end{tikzpicture}$.

We state our theorem concerning Wilf-equivalence of disjoint sums of POPs.

\begin{theorem}\label{thm: disjoint-sum-shape-w}
    If $p\sim p'$ and $q\sim q'$, then $p+q\sim p'+q'$.
\end{theorem}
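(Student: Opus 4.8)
The plan is to obtain an explicit closed formula for $|\SSS_n(p+q)|$ in terms of the two avoidance sequences $\bigl(|\SSS_m(p)|\bigr)_{m\ge 0}$ and $\bigl(|\SSS_m(q)|\bigr)_{m\ge 0}$ alone. Since the hypotheses $p\sim p'$ and $q\sim q'$ assert exactly that these sequences are unchanged when we pass to $p'$ and $q'$, the equality $|\SSS_n(p+q)|=|\SSS_n(p'+q')|$ for all $n$ — that is, $p+q\sim p'+q'$ — will then be immediate.

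First I would record what an occurrence of $p+q$ is. Writing $k$ for the size of $p$ and $\ell$ for the size of $q$, a subsequence $\pi_{i_1}\cdots\pi_{i_{k+\ell}}$ of $\pi=\pi_1\cdots\pi_n$ is an occurrence of $p+q$ precisely when $\pi_{i_1}\cdots\pi_{i_k}$ is an occurrence of $p$, $\pi_{i_{k+1}}\cdots\pi_{i_{k+\ell}}$ is an occurrence of $q$, and $i_k<i_{k+1}$; there is no condition whatsoever relating the values of the first block to those of the second, because in a disjoint sum every element of the $p$-part is incomparable to every element of the $q$-part. Hence $\pi$ contains $p+q$ if and only if there is an index $t$ with $\pi_1\cdots\pi_t$ containing $p$ and $\pi_{t+1}\cdots\pi_n$ containing $q$. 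Sharpening this: if $\pi$ contains $p$, let $a=a(\pi)$ be the least $t$ for which the prefix $\pi_1\cdots\pi_t$ contains $p$ (so $\pi_1\cdots\pi_{a-1}$ avoids $p$); since enlarging a prefix can only help it contain $p$ and shrinking a suffix can only help it avoid $q$, one gets that $\pi$ avoids $p+q$ if and only if the suffix $\pi_{a+1}\cdots\pi_n$ avoids $q$. If $\pi$ avoids $p$, then $\pi$ trivially avoids $p+q$.

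This dichotomy gives a bijective decomposition of $\SSS_n(p+q)$. The permutations avoiding $p$ contribute $|\SSS_n(p)|$. Every remaining $\pi\in\SSS_n(p+q)$ arises in exactly one way by choosing a length $a\in\{k,\dots,n\}$, a value set $S\in\binom{[n]}{a}$ for the first $a$ positions, a pattern $\tau$ of size $a$ that contains $p$ but whose length-$(a-1)$ prefix avoids $p$ (placed order-isomorphically on $S$), and an arbitrary $q$-avoiding pattern on $[n]\setminus S$ for the last $n-a$ positions; here one checks, using that no value condition links the two blocks, that the resulting $\pi$ has $a(\pi)=a$ and creates no unintended occurrence of $p+q$. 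Letting $A_a$ be the set of such patterns $\tau$, this yields
\begin{equation*}
|\SSS_n(p+q)| \;=\; |\SSS_n(p)| \;+\; \sum_{a=k}^{n}\binom{n}{a}\,|A_a|\,|\SSS_{n-a}(q)|.
\end{equation*}
A short count finishes: a size-$a$ pattern lies outside $A_a$ exactly when it avoids $p$ or when its length-$(a-1)$ prefix already contains $p$, and conditioning on the last entry shows there are $a\,|\SSS_{a-1}(p)|$ patterns whose length-$(a-1)$ prefix avoids $p$, whence $|A_a|=a\,|\SSS_{a-1}(p)|-|\SSS_a(p)|$. Substituting, $|\SSS_n(p+q)|$ becomes a fixed expression in $\bigl(|\SSS_m(p)|\bigr)_m$ and $\bigl(|\SSS_m(q)|\bigr)_m$, and the hypotheses give the result.

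I expect the only real obstacle to be pinning down the containment characterization and verifying that the reconstruction in the decomposition is genuinely a bijection — specifically, that freely interleaving the values of the $p$-prefix and the $q$-suffix never secretly manufactures an extra $p+q$-occurrence, and that $a(\pi)$ comes out equal to the chosen $a$. This is exactly the place where disjointness of the sum is used; everything afterward is bookkeeping with binomial coefficients. (One can if desired upgrade the count to an explicit bijection $\SSS_n(p+q)\to\SSS_n(p'+q')$ by transporting prefixes and suffixes separately, but the enumerative version above is all that is needed.)
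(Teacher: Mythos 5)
Your proof is correct, but it takes a genuinely different route from the paper's. The paper argues bijectively on permutation matrices: it fixes a bijection $\Lambda$ between $q$-avoiding and $q'$-avoiding transversals, colours a cell according to whether an occurrence of $p$ lies entirely to its left, observes that the induced transversal on the region lying to the right of some $p$-occurrence must avoid $q$, applies $\Lambda$ there and restores the remaining cells; this yields $p+q\sim p+q'$, a mirror-image argument yields $p+q\sim p'+q$, and the two equivalences are composed. You instead decompose each $\pi\in\SSS_n(p+q)$ containing $p$ at the least $a$ for which $\pi_1\cdots\pi_a$ contains $p$, and arrive at the closed formula
\[
\lvert\SSS_n(p+q)\rvert \;=\; \lvert\SSS_n(p)\rvert+\sum_{a}\binom{n}{a}\bigl(a\,\lvert\SSS_{a-1}(p)\rvert-\lvert\SSS_a(p)\rvert\bigr)\,\lvert\SSS_{n-a}(q)\rvert,
\]
which visibly depends only on the two avoidance sequences and therefore handles both replacements at once. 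The steps check out: because the two blocks of an occurrence of $p+q$ are mutually incomparable, there are no value constraints between them, which is exactly what legitimises the prefix/suffix characterisation of containment and the free interleaving of the value set $S$ with its complement; and the inclusion of the $p$-avoiders of $\SSS_a$ inside the set of $\tau$ with $\st(\tau_1\cdots\tau_{a-1})$ avoiding $p$ justifies $\lvert A_a\rvert=a\lvert\SSS_{a-1}(p)\rvert-\lvert\SSS_a(p)\rvert$. What your approach buys is a self-contained enumerative identity that avoids the paper's geometric bookkeeping with coloured Ferrers boards; what the paper's approach buys is an explicit bijection between the two avoidance classes rather than a mere equality of cardinalities.
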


Our next theorem states that, up to Wilf-equivalence, the patterns $p+q$ and $q+p$ are equivalent.

\begin{theorem}\label{thm: disjoint-sum-wilf}
    Let $p, q$ be two POPs. Then, $p+q\sim q+p$.
\end{theorem}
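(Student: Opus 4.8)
The plan is to exhibit a bijection between $\SSS_n(p+q)$ and $\SSS_n(q+p)$ for each $n$, built from the structure of how an occurrence of a disjoint sum sits inside a permutation. Recall that avoiding the POP $p+q$ means simultaneously avoiding the set of patterns arising as inverses of linear extensions of $p+q$; concretely, a permutation $\pi$ contains $p+q$ iff there is a subsequence splitting into a ``$p$-block'' $B_1$ (on some set of positions $i_1 < \dots < i_a$) followed by a ``$q$-block'' $B_2$ (on positions $j_1 < \dots < j_b$ with $i_a < j_1$) such that $B_1$ realizes $p$ and $B_2$ realizes $q$, with \emph{no order constraint between the values in $B_1$ and those in $B_2$} — this is exactly what distinguishes a disjoint sum from an ordinal sum. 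The key observation is that $p+q$ and $q+p$ differ only in which block comes first in position order, while the internal poset structures $p$ and $q$ (hence the sets of classical patterns they each generate) are the same, merely relabeled by the additive shift.

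The main step is to define a position-reversal/value-reversal type involution and check it interacts correctly with the block decomposition. First I would consider the reverse-complement map or, more economically, just the reversal $r:\pi_1\cdots\pi_n \mapsto \pi_n\cdots\pi_1$. Reversal sends an occurrence of a pattern $\sigma$ read left-to-right into an occurrence of the reverse pattern $\sigma^r$ read left-to-right; at the level of POPs, it sends the POP $P$ (with labels giving the value order and position order giving the linear sweep) to a related POP. The delicate point is that reversal swaps ``position order'' but preserves ``value order,'' so applied to $p+q$ it yields the POP whose \emph{later} block (in position) is a copy of $p$ and whose \emph{earlier} block is a copy of $q$ — which is precisely $q+p$ after accounting for the label shift, because in a disjoint sum the labels only encode value order within each component and there is no cross-component value relation to get confused. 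Thus $\pi$ avoids $p+q$ iff $r(\pi)$ avoids $q+p$, and $r$ is an involution on $\SSS_n$, giving the desired bijection $\SSS_n(p+q) \leftrightarrow \SSS_n(q+p)$ and hence $|\SSS_n(p+q)| = |\SSS_n(q+p)|$ for all $n$.

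The step I expect to be the main obstacle is verifying the claim ``$(p+q)^r = q+p$ as POPs'' rigorously at the level of the generating sets of classical patterns, because one must be careful that reversal does not also reverse the internal value-comparisons within each block. It does not: reversal changes which indices are compared as ``earlier/later'' but a subsequence $\pi_{i_1}\cdots\pi_{i_k}$ with $i_1<\dots<i_k$ maps to $\pi_{i_k}\cdots\pi_{i_1}$, and the relative \emph{magnitudes} $\pi_{i_j}$ are untouched — so ``$x <_P y$'' (a value statement) is preserved while ``$x$ precedes $y$'' (a position statement) is flipped. Since in $p+q$ the only position-order content that matters for the split is ``all of the $p$-block precedes all of the $q$-block,'' flipping position order turns this into ``all of the $q$-block precedes all of the $p$-block,'' i.e. $q+p$, while each block's internal value-poset is carried over verbatim up to the harmless additive relabeling. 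One should spell this out by taking an arbitrary linear extension of $p+q$, inverting it, reversing, and identifying the result as the inverse of a linear extension of $q+p$; this is a short but bookkeeping-heavy verification, and it is the only place where care is genuinely needed. Everything else — that $r$ is a bijection, that it is an involution — is immediate.
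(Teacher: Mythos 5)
There is a genuine gap at exactly the point you flagged as delicate: the identity ``$(p+q)^r=q+p$ as POPs'' is false, and the justification you give for it does not hold up. In this formalism the labels of a POP index the \emph{position ranks} of the entries of a subsequence (label $j$ is attached to the $j$-th entry in position order), and the poset relation is stated on those labels. When you reverse $\pi$, the entry that occupied position rank $j$ in an occurrence of a size-$m$ POP occupies position rank $m+1-j$ in the image, so although each individual value comparison is indeed untouched, the labeled poset realized by the image subsequence is the one obtained by complementing all labels, i.e.\ the Gao--Kitaev reverse $r(p+q)$ --- and this complements the labels \emph{inside} each block as well, not just the relative order of the two blocks. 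Concretely, take $p$ a single vertex and $q$ the chain encoding the classical pattern $12$: an occurrence of $p+q$ in $\pi$ is a triple of positions whose last two entries form an ascent, and under reversal this becomes a triple whose first two entries form a \emph{descent}, i.e.\ an occurrence of $21$ followed by an isolated vertex, not of $q+p$. So reversal is a bijection from $\SSS_n(p+q)$ onto $\SSS_n(\tilde q+\tilde p)$, where $\tilde p,\tilde q$ are the internally label-reversed copies of $p,q$, and this is in general a different set from $\SSS_n(q+p)$.

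The gap is repairable, but only by adding the ingredient the paper in fact uses: since $\tilde p\sim p$ and $\tilde q\sim q$ (each POP is Wilf-equivalent to its reverse), Theorem~\ref{thm: disjoint-sum-shape-w} converts $\tilde q+\tilde p$ into $q+p$ up to Wilf-equivalence. That is precisely the paper's argument, packaged as the composition $r_Q\circ r\circ r_P$ applied to $p+q$: the partial label-reversals $r_P$ and $r_Q$ are each justified by Theorem~\ref{thm: disjoint-sum-shape-w}, and only the middle step is the honest permutation-level reversal. No single symmetry of $\SSS_n$ can swap the two blocks while fixing their internal patterns, so without this extra step your map lands in the wrong avoidance class and the proof as written is incomplete.
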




Our proofs of Theorems \ref{thm: disjoint-sum-shape-w} and \ref{thm: disjoint-sum-wilf} appear in Section \ref{sec: proofs-3-4}.
Using Theorems \ref{thm: main1-shape}, \ref{thm: main2-wilf-3size}, \ref{thm: disjoint-sum-shape-w}, \ref{thm: disjoint-sum-wilf}, we classify the Wilf-equivalence classes of POPs of size $3,4,$ and $5$ all of whose connected components are linearly ordered (chains). The classification of size $3$ POPs are done in Section \ref{sec: len3}. In this section, we additionally show the following shape-Wilf-equivalence result.
\begin{theorem}\label{thm: len3-B12-B21}
    The POPs $p:=$\begin{tikzpicture}[baseline=(current bounding box)]
    \filldraw  (1,0.25) circle (2pt);
    \filldraw  (1,-.25) circle (2pt);
    \filldraw  (0.5,0) circle (2pt);
    \draw (1,.25) -- (1,-.25); 
    \node[] at (0.3,0) {$1$};
    \node[] at (1.3,.25)   { $2$};
    \node[] at (1.3,-.25)   { $3$};
\end{tikzpicture}
and $p':=$\begin{tikzpicture}[baseline=(current bounding box)]
    \filldraw  (1,0.25) circle (2pt);
    \filldraw  (1,-.25) circle (2pt);
    \filldraw  (0.5,0) circle (2pt);
    \draw (1,.25) -- (1,-.25); 
    \node[] at (0.3,0) {$1$};
    \node[] at (1.3,.25)   { $3$};
    \node[] at (1.3,-.25)   { $2$};
\end{tikzpicture} are shape-Wilf-equivalent.
\end{theorem}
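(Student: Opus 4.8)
The plan is to prove the stronger shape-Wilf-equivalence by exhibiting, for every Ferrers board $\lambda$, an explicit bijection $\SSS_\lambda(p)\longrightarrow\SSS_\lambda(p')$. Reading off the labels, an occurrence of $p$ is a length-$3$ subsequence $\pi_{i_1}\pi_{i_2}\pi_{i_3}$ with $\pi_{i_3}<\pi_{i_2}$, so $p$ is the pattern set $\{132,231,321\}$ and, symmetrically, $p'=\{123,213,312\}$; these are interchanged by the complement $x\mapsto n{+}1{-}x$, which settles the case of a square board in one line. The content of the theorem is that complementing the rows of a transversal turns a Ferrers board into a different (upside-down) shape, so one needs a substitute for the complement that is compatible with the staircase boundary of $\lambda$.

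First I would translate POP-containment in a transversal into a single inequality. Record a transversal $T\in\SSS_\lambda$ by the map $j\mapsto\rho_j$ giving the row of the $1$ in column $j$. An occurrence of $p$ chooses columns $c_1<c_2<c_3$ with $\rho_{c_3}<\rho_{c_2}$; the $3\times3$ submatrix it spans lies in $\lambda$ precisely when its top-right corner cell $(\max(\rho_{c_1},\rho_{c_2}),c_3)$ does, i.e.\ when $c_3\le\lambda_{\max(\rho_{c_1},\rho_{c_2})}$, and minimising over the free column $c_1<c_2$ replaces $\max(\rho_{c_1},\rho_{c_2})$ by $\max\big(\min_{c<c_2}\rho_c,\ \rho_{c_2}\big)$. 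So ``$T$ contains $p$'' becomes: some column $c_2\ge2$ has a strict descent to a later column $c_3$ that still fits under the staircase in this sense. The same unwinding with ``$<$'' turned into ``$>$'' describes containment of $p'$.

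From this I would extract structural descriptions of the two avoidance classes: a $p$-avoider is, scanning columns left to right, strictly increasing in the $\rho$-values except for descents ``shielded'' by the staircase, and those shielded descents are confined to a top-left portion of the transversal determined by $\lambda$ and the left part of $T$, outside of which $T$ restricts to an increasing transversal of a rectangular sub-board; the mirror statement (decreasing run) describes $p'$-avoiders. This is the encoding scheme: each avoider is recorded by its shielded portion together with the rows and columns occupied by the free run, the run itself being determined once one fixes the increasing-versus-decreasing choice. The bijection keeps the shielded portion and reverses the free run; the shielded region is carved out precisely so that this reversal keeps the filling inside $\lambda$, creates no forbidden occurrence, and is its own inverse up to swapping the roles of $p$ and $p'$. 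One then checks that the shielded data is shared verbatim by the two classes, which makes the map a bijection.

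The heart of the matter — and the step I expect to be most delicate — is the structural analysis feeding the encoding. Because the admissibility of a descent ending in column $c_3$ is governed by the \emph{moving} bound $\lambda_{\max(\min_{c<c_2}\rho_c,\,\rho_{c_2})}$ rather than a fixed one, one must pin down exactly which descents a $p$-avoider may carry and, in particular, cut out the shielded region sharply enough that the complementary ``free'' part of every avoider genuinely fills a rectangle. Once that is in place the reversal step is routine: on a rectangular board the row-complement is legitimate and does precisely what it does for ordinary permutations, namely trade the increasing constraint for the decreasing one.
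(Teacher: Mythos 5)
You set up the problem correctly: the two POPs are the pattern sets $\{132,231,321\}$ and $\{123,213,312\}$, complementation disposes of square boards, and your reduction of containment to the single inequality $c_3\le\lambda_{\max(\min_{c<c_2}\rho_c,\,\rho_{c_2})}$ is right. But the whole proof rests on the structural claim of your third paragraph --- that every avoider splits into a ``shielded top-left portion'' plus a monotone run filling a \emph{rectangular} sub-board, with the shielded data ``shared verbatim'' by the two classes --- and that claim is both unproved and false as stated. Shielded descents are not confined to a top-left region: on $\lambda=(4,4,2,2)$ the transversal $\rho=(3,4,2,1)$ avoids $\{132,231,321\}$, and its descent $\rho_3>\rho_4$ sits in the bottom-right corner of the board, shielded only because the earlier columns occupy short rows. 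More fundamentally, the complement of whatever region you carve out as shielded is in general another Ferrers board rather than a rectangle, and the restriction of $T$ to it need not be monotone --- it is a smaller instance of the same avoidance problem (on $\lambda=(6,6,4,4,2,2)$ every transversal decomposes into three independent $2\times2$ blocks, each free to be increasing or decreasing), so the decomposition must be set up recursively. Finally, the ``verbatim sharing'' of the non-free part between the two classes fails: the two classes constrain that part differently, and in the paper's own worked example the avoider $4,5,3,1,2$ corresponds to $4,1,5,3,2$, whose initial columns do not agree with those of the source. You flag this structural analysis as the delicate step and then do not carry it out, so what remains is a plan rather than a proof.

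For comparison, the paper's argument is close in spirit to your ``encoding scheme'' but supplies exactly the machinery you defer. It inserts the $1$'s row by row from the top, shows that at each stage there are at most two and at least one suitable cells (located differently for $p$ and $p'$; Lemmas \ref{lem: atmost2}, \ref{lem: rect}, \ref{lem: atleast1}), records the choice at each unforced stage as a letter of a word over $\{0,1,2\}$, and proves via Lemmas \ref{lem: forced-descent} and \ref{lem: unforced-descent} that the forced and unforced stages occur at the same times for both patterns, so that the word is a complete and common invariant; the bijection is ``same word, other pattern,'' justified by an induction that recurses into the sub-board left after a top-left square is filled. Those synchronization lemmas are the actual content of the theorem, and your sketch contains no substitute for them.
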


In Section \ref{sec: len4}, we completely classify Wilf-equivalence classes,
for POPs of size $4$ whose components are all chains. 
Finally, in Section \ref{sec: len5}, we completely classify the 
Wilf-equivalence classes for POPs of size $5$ whose 
components are all chains.

\section{Proofs of Theorems \ref{thm: main1-shape} and \ref{thm: main2-wilf-3size}}\label{sec: proofs}

To prove Theorem \ref{thm: main1-shape}, we follow an idea presented in many sources (see \cite{bwx-main}, \cite{baxter-shape-wilf-vincular}, \cite{baxter-even-perm}, \cite{burstein-shape-wilf}). The key difference is presented in the next lemma.  

Before we jump into the proofs, we define the standardisation of a labeled poset. Let $P$ be a labeled poset with labels $x_1,\dots,x_r\in \mathbb{Z}_{>0}$ such that $x_1<\dots<x_r$. Then, the standardisation of $P$ is the labeled poset $\st(P)$ with the same underlying vertices but the labels $x_i$ are replaced by $i$ for each $i\in [r]$. Note that standardisation makes any labeled poset a POP.

\begin{lemma}\label{lem: isolated-occ}
     Let $p$ be a POP of size $r$ and $I=\{i_1,\dots, i_s\}$ be the set of isolated vertices. An occurrence of $p$ in a transversal of a square Ferrers board $F$ is equivalent to the existence of two subsets $R=\{x_1,\dots,x_{r-s}\}$ and $C=\{ y_1,\dots, y_r \}$ such that the submatrix 
    $$\begin{pmatrix}
        F[x_1,y_1]&F[x_1,y_2]&\dots&F[x_1,y_r]\\
        F[x_2,y_1]&F[x_2,y_2]&\dots&F[x_2,y_r]\\
        \vdots&\vdots&\ddots &\vdots\\
        F[x_{r-s},y_1]&F[x_r,y_2]&\dots&F[x_{r-s},y_r]
    \end{pmatrix}$$
    contains an occurrence of $\st(p\backslash I)$ in the columns indexed by $[r]\backslash I$ and each cell $(x_i,y_j)$ lies inside $F$ for $1\le i\le r-s$ and $1\le j\le r$.
\end{lemma}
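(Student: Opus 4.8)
The plan is to unwind both sides of the claimed equivalence directly from the definitions of a POP-occurrence and of containment in a transversal, so that the argument is essentially bijective and the only real work is keeping track of rows and of which cells fall inside $F$. Recall that an occurrence of $p$ in a transversal $T$ of $F$ amounts to a choice of $r$ columns together with the rows of their $1$'s, such that the relative order of those rows is the one prescribed by the poset $P$ and every cell of the $r\times r$ box they span lies in $F$; the defining feature of an isolated vertex of $p$ is that it contributes a column but imposes no order constraint on its row. The claim is then that the rows of the $s$ isolated vertices are redundant data and may be suppressed, leaving the $(r-s)\times r$ picture in the statement.

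For the direction ``occurrence of $p$ $\Rightarrow$ submatrix'', I start from columns $c_1<\dots<c_r$ whose $1$'s lie in rows $\rho_1,\dots,\rho_r$ and realise $p$, set $C=\{c_1,\dots,c_r\}$, and set $R=\{\rho_j: j\in[r]\setminus I\}$; these $r-s$ rows are pairwise distinct since distinct columns of a transversal carry their $1$'s in distinct rows. Restricting the resulting $(r-s)\times r$ submatrix to the columns indexed by $[r]\setminus I$ leaves exactly one $1$ in each of its $r-s$ rows and each of those $r-s$ columns, and the relative order of those $1$'s is exactly the order $P$ induces on $[r]\setminus I$; since standardisation only relabels while preserving both the order relations and the left-to-right order of the columns, this restricted submatrix is an occurrence of $\st(p\setminus I)$. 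Finally every cell $(x_i,y_j)$ with $x_i\in R$ and $y_j\in C$ already lies in the original $r\times r$ box, hence in $F$.

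For the converse, given $R=\{x_1,\dots,x_{r-s}\}$ and $C=\{y_1<\dots<y_r\}$ with the stated property, I reconstruct an occurrence of $p$ on the columns $y_1,\dots,y_r$: in each column $y_j$ with $j\notin I$ I keep the $1$ of the $\st(p\setminus I)$-occurrence (an actual $1$ of $T$, lying in a row of $R$), and in each column $y_j$ with $j\in I$ I take the unique row $\rho(y_j)$ in which $T$ has its $1$ in column $y_j$. One checks that the $\rho(y_j)$, $j\in I$, are distinct from one another and from every row of $R$: a row of $R$ carries its $1$ in a column indexed by $[r]\setminus I$, whereas $\rho(y_j)$ carries one in column $y_j$. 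The selected $1$'s in the non-isolated columns reproduce $\st(p\setminus I)$ and hence, after relabelling, the order $P$ induces on $[r]\setminus I$, while the isolated columns add nothing; so it remains only to verify that every cell of the new $r\times r$ box lies in $F$. For a row of $R$ this is the hypothesis; for a row $\rho(y_j)$ with $j\in I$ it follows from the shape of $F$, since the cell $(\rho(y_j),y_j)$ lies in $F$ and a Ferrers board is closed to the left and downward (for a genuinely square board there is nothing to check here).

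The conceptual content — isolated vertices carry a column but no row constraint, hence their rows may be forgotten — is immediate; the care lies in the bookkeeping: (i) confirming that the rows recovered for the isolated columns in the converse are genuinely new, and (ii) propagating ``this cell lies in $F$'' from the given $(r-s)\times r$ rectangle and the $s$ recovered $1$-cells to the full $r\times r$ box, using the left/down closure of a Ferrers board. I expect step (ii) to be the main point to pin down carefully.
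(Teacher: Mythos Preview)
Your argument is correct and matches the paper's proof almost exactly: in both directions one simply drops or restores the rows carrying the $1$'s of the isolated columns, and checks that the resulting box sits inside $F$. The paper's write-up is terser but structurally identical.

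One small point worth tightening: your step (ii) for the converse, where you try to propagate ``lies in $F$'' to the full $r\times r$ box using left/down closure of a Ferrers board, does not actually work in general. Knowing that $(\rho(y_j),y_j)\in F$ and that $F$ is left/down closed tells you nothing about the cells $(\rho(y_j),y_k)$ with $k>j$, which lie to the \emph{right}; and there is no reason $\rho(y_j)$ should be bounded above by any row of $R$. Your parenthetical remark is the real argument here: the lemma is stated for a \emph{square} board, so every cell is automatically inside $F$ and there is nothing to check. That is exactly how the paper handles it (``Since $F$ is a square Ferrers board, this newly constructed $M$ will lie completely inside $F$''). So you should promote the parenthetical to the main text and drop the left/down-closure sentence, which is misleading.
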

\begin{proof}
Let $M$ be a submatrix that is an occurrence of $p$ in $F$.  Let $M'$ be the matrix obtained by deleting from $M$ the rows that contains $1$'s in the columns indexed by $I$. 

In $M$, the columns indexed by $[r]\backslash I$ contain an occurrence of $\st(p\backslash I)$. By deleting these rows that contains $1$'s in the columns indexed by $I$, we have not affected ones that appear in the columns in $[r]\backslash I$. Therefore, the columns indexed by $[r]\backslash I$ in $M'$ contain an occurrence of $\st(p\backslash I)$.

Similarly, given a $(r-s) \times s$ submatrix such that the columns indexed by $[r]\backslash I$ contain an occurrence of $\st(p\backslash I)$, we can look for the rows that contain occurrence of $1$'s in the columns indexed by $I$ and add the corresponding cells to the matrix $M'$ to uniquely recover $M$. Since $F$ is a square Ferrers board, this newly constructed $M$ will lie completely inside $F$, thereby completing the proof.
\end{proof}
We call such a $(r-s)\times s$ submatrix, obtained in Lemma \ref{lem: isolated-occ}, \emph{an essential occurrence} of $p$.
We are now ready to prove Theorem \ref{thm: main1-shape}.

\begin{proof}[Proof of Theorem \ref{thm: main1-shape}]
Suppose $F$ is a square Ferrers board and a transversal $f\in F(p)$. Fix a bijection $g$ that maps $\Av_F(I_p)$ to $\Av_F(I_p')$. This exists as these patterns are shape-Wilf-equivalent. We start with all the squares colourless. 
\begin{enumerate}
\item For any square $(i,j)\in F$, if the subboard of $F$ that is above and to the right of $(i,j)$ contains an essential occurrence of $J_p$ (the submatrix lies entirely to the top-right of the cell), then we colour it white; otherwise, we colour it gray.

\item Find the 1's coloured gray, and colour the corresponding rows and columns gray.

\item  Remove the gray squares from $F$, denote by $F'$ the shape formed by the white squares, justified (squashed) bottom-left, and denote by $f'$ the induced transversal of $F'$. Note that $F'$ is a Ferrers board and the transversal $f'$ avoids $J_p$.  

\item Since $I_p \sim_s I_{p'}$, we can map $f'$ into $f''$ bijectively, using $g$ which we know exists, having the same shape and avoiding $I_{p'}$. 

\item Put the gray squares back in their original places, bijectively, to get a transversal
$f'''\in F(p')$.
\end{enumerate}
Therefore, we have $p\sim p'$.  The proof is complete.
\end{proof}

Before we proceed to our proof of Theorem \ref{thm: main2-wilf-3size}, we 
need a couple of definitions. Let $p$ be a POP of size $k$. 
For $1\le r\le k$, define $p_r$ to be the labeled subposet induced by the vertices labeled $[r]$. A \emph{prefix} of the POP $p$ is an occurrence of the POP $p_r$ for some $1\le r\le k$. 

Given a POP $p$ and a permutation $w$, define $\emph{p-rank}$ of an entry in the permutation $w$ to be the longest $w_{i_1}\dots w_{i_k}$ that is a prefix of the POP $p$. For a permutation in the set $\Av_n(p)$ where $p$ is a POP of size $k$, the $\emph{p-rank}$ of an entry can range anywhere between $1$ and $k-1$.
The following lemma can be proved using Theorem \ref{thm: main1-shape} by the reversal operation but we give a different proof.

\begin{lemma}\label{lem: from-west}
Let $p,p'$ be two POPs of size $k$ with the same isolated vertex labels $I$ such that $k-2\notin I$ and the vertices labeled $k-1,k$ are greater than the vertices labeled $[k-2]\backslash I$ in both $p,p'$, that is, if $x \in [k-2]\backslash I$, then $k-1,k \geq_p x$ and $k-1,k \geq_p' x$. Let $I_p, I_{p'}$ be the subposets induced by the vertices labeled $[k-2]\backslash I$ and $J_p,J_{p'}$ be the subposets induced by ${k-1,k}$ in $p,p'$ respectively. 
    If $I_p=I_{p'}$ and $$J_p = \begin{tikzpicture}[baseline=(current bounding box)]
		\filldraw  (0,0) circle (2pt); 
		\filldraw   (0,.5) circle (2pt);
        		\draw (0,0) -- (0,.5); 
        \node[] at (0,.75)  { $k-1$};
		\node[] at (0,-.3)  { $k$};
\end{tikzpicture}, \hspace{1cm} J_{p'}=\begin{tikzpicture}[baseline=(current bounding box)]
		\filldraw  (0,0) circle (2pt); 
		\filldraw   (0,.5) circle (2pt);
        		\draw (0,0) -- (0,.5); 
        \node[] at (0,.8)  { $k$};
		\node[] at (0,-.25)  { $k-1$};
\end{tikzpicture},$$ then $p \sim p'$.
\end{lemma}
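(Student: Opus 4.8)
The plan is to prove Lemma~\ref{lem: from-west} by constructing an explicit bijection between $\Av_n(p)$ and $\Av_n(p')$ that fixes ``most'' of each permutation and only redistributes the entries of $p$-rank $k-1$ relative to those of $p$-rank $k-2$. Since $p$ and $p'$ differ only in the labelled subposet $J$ on the top two vertices $\{k-1,k\}$ --- a chain oriented one way versus the other --- and they share the same isolated vertices $I$ and the same $I_p = I_{p'}$, an occurrence of $p$ (respectively $p'$) in $w$ is pinned down by first locating an occurrence of the common lower part $\st(p\backslash\{k-1,k\})$ together with the isolated vertices, and then finding two further entries, to the right of everything already chosen, that are both larger than the ``top'' of that lower part and are in increasing (for $p$) or decreasing (for $p'$) order. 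So the whole question reduces to a West-style argument about where, among the entries that are eligible to complete a prefix $p_{k-2}$, one may place the final pair without creating a $12$ versus a $21$.

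First I would fix a permutation $w$ and classify its entries by $p$-rank (equivalently $p'$-rank, since the two POPs have the same $p_{k-2} = p'_{k-2}$, hence the same rank function up to the last level). Entries of rank $\le k-2$ are untouched by the bijection. Among the entries of rank $k-1$ --- these are exactly the entries that, when appended to some rank-$(k-2)$ prefix lying entirely to their left and below them, would complete an occurrence of $p_{k-1}$ --- I would, column by column from right to left (or row by row), read off for each such entry the set of ``active'' smaller values that could serve as its partner to complete $p$ or $p'$. Avoiding $p$ means: no two rank-$(k-1)$-capable entries in increasing order sit above a common completable prefix; avoiding $p'$ means the same with decreasing order. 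This is precisely the local configuration handled by the classical Backelin--West--Xin / West ``reverse the last step'' bijection: on each relevant sub-Ferrers-region determined by the shadow of a $p_{k-2}$-prefix, replace an increasing arrangement of the top entries by the decreasing one of the same support, and vice versa. I would package this as: the forbidden sub-pattern is governed by $\Av$ of $\{12\}$ versus $\Av$ of $\{21\}$ on a certain staircase region attached to each prefix, and these are trivially equinumerous by order-reversal within the region, done coherently across all prefixes.

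The key technical step, and where I expect the main obstacle, is showing that this order-reversal can be performed \emph{simultaneously and consistently} over all completable prefixes $p_{k-2}$ without the reversals interfering: a single rank-$(k-1)$ entry may complete several different prefixes, and reversing the pair it belongs to relative to one prefix must not create a forbidden pattern relative to another. The standard resolution --- which I would adapt --- is to not work prefix-by-prefix but to process the entries of the top level in a global left-to-right (or bottom-up) sweep, maintaining the invariant that the partial filling restricted to ranks $\le k-2$ is unchanged and that the entries of rank $k-1$ placed so far realise, on every ``critical cell'', the opposite comparison; the isolated vertices $I$ come along for free because, by Lemma~\ref{lem: isolated-occ} and the definition of essential occurrence, they impose only the existence of $1$'s in designated columns strictly to the right, a condition symmetric in $p$ and $p'$ and untouched by the sweep. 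I would then verify the map is a bijection by exhibiting its inverse as the analogous sweep with increasing and decreasing interchanged, and conclude $\card{\Av_n(p)} = \card{\Av_n(p')}$ for all $n$, i.e.\ $p \sim p'$.

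An alternative, cleaner route I would mention and possibly prefer: deduce Lemma~\ref{lem: from-west} directly from Theorem~\ref{thm: main1-shape} applied to the reverses $p^{r}, (p')^{r}$ (reversing a permutation turns a ``top-of-everything'' chain into a ``bottom-of-everything'' chain, swapping the roles of $k-1,k$ appropriately), using that the two-element chains are trivially shape-Wilf-equivalent to themselves in either orientation once moved to the bottom --- the text explicitly notes this reversal proof exists --- and then observe that the hypotheses of Theorem~\ref{thm: main1-shape} are met with $I_p^{r} \sim_s I_{p'}^{r}$ coming from $I_p = I_{p'}$. The self-contained bijective proof above is, however, what I would write out in full, since it is the version that generalises to Theorem~\ref{thm: main2-wilf-3size}.
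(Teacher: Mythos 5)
Your proposal is essentially the paper's proof: the paper also applies West's bijection, fixing all entries of $p$-rank at most $k-2$ and redistributing the rank-$(k-1)$ entries among their positions (via a greedy left-to-right assignment of the smallest eligible value), with the inverse given by the mirrored sweep and the verification deferred to West's thesis. Your alternative route via reversal and Theorem~\ref{thm: main1-shape} is likewise the one the paper explicitly notes is available but does not take.
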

\begin{proof}
     We will use the idea from the bijection of West \cite{west-thesis} to give a bijection between $\Av_n(p)$ and $\Av_n(p')$. Let $w\in \Av_n(p)$. 
    \begin{enumerate}
        \item[Step 1:] Leave the entries of $w$ with $p$-rank $k-2$ or less untouched and in their position.
        \item[Step 2:] Let $W$ be the set of positions of $w$ in which an entry of $p$-rank $k-1$ is located, and let $X$ be the set of entries of $w$ that are of $p$-rank $k-1$. Now fill the positions of $W$ with the entries in $X$ from left to right, so that each position $i \in W$ is filled with the smallest entry $x$ of $X$ that has not been placed yet and that is larger than the closest entry of rank $k-2$ on the left of position $i$.
    \end{enumerate}  Let $f(w)$ be the obtained permutation. Note that $f(w)$ avoids $p'$ since the existence of such a pattern in $f (w)$ would mean that the last two entries of that pattern were not placed according to the rule specified above. To prove that this is a bijection, we give the inverse of $f$.
    Let $w' \in \Av_n(p')$.

    \begin{enumerate}
        \item[Step 1:] Leave the entries of $w'$ of $p$-rank $k-2$ or less in their original position 
        \item[Step 2:] Let $W,X$ be defined as above. Remove the elements in $X$ from their positions in $W$. In each step, move the largest element of $X$ that has not yet been placed in the leftmost available slot $j$, and move each element of $X$ that has been weakly to the right of j one notch to the right. Then in each step, each slot $i$ contains an entry that is larger than what was in $i$ before, or an entry that was to the left of $i$ before. Both of these steps result in the new entry in position $i$ having rank $k-1$.
    \end{enumerate}

We omit the proof that these are inverses of each other because the argument is same as the one given in West's thesis \cite{west-thesis}.
\end{proof}

We will use the bijection in Lemma \ref{lem: from-west} to define a bijection to prove Theorem \ref{thm: main2-wilf-3size}.

    Define $q:=\st(p\backslash \{k-1\})$ and $q':=\st(p'\backslash \{k-1\})$.
    Let $w\in \Av_n(p)$. Let $W_1$ be the set of positions of $w$ where the entries of $q$-rank $k-1,k$ are located. Then $W_1$ is a union of maximal integer intervals.
    Similarly, for $w\in \Av_n(p')$, let $W_2$ be the set of positions of $w$ where the entries of $q'$-rank $k-1,k$ are located.
    We will now give a bijection $F$ from the set $\Av_n(p)$ to the set $\Av_n(p')$. Start with a permutation $w\in \Av_n(p)$.
    \begin{enumerate} 
        \item Let $i_1,\dots,i_k$ be the positions of $q$- rank $k$. Multiply on the left by $s_{i_1-1}\dots s_{i_k-1}$.
        \item Applying $f$ from Lemma \ref{lem: from-west} on the resulting permutation.
        \item Multiply on the left by $s_{i_1-1}\dots s_{i_k-1}$.
    \end{enumerate}
    The resulting permutation is our required permutation.

    To finish the proof of Theorem \ref{thm: main2-wilf-3size}, we need to prove that the function $F$ defined above is bijection from $\Av_n(p)$ to $\Av_n(p')$.     
\begin{proof}[Proof of Theorem \ref{thm: main2-wilf-3size}]
    We show that the inverse exists. The inverse $G$ is as follows: Start from a permutation in $\Av_n(p')$.
    \begin{enumerate} 
        \item Let $i_1,\dots,i_k$ be the positions of $q'$- rank $k$. Multiply on the left by $s_{i_1-1}\dots s_{i_k-1}$.
        \item Applying $f^{-1}$ (as it is a bijection) from Lemma \ref{lem: from-west} on the resulting permutation.
        \item Multiply on the left by $s_{i_1-1}\dots s_{i_k-1}$.
    \end{enumerate}
    
Clearly, given a permutation in $\Av_n(p)$, the entries of $q$-rank $k$ are well-defined. Therefore, the map itself is well-defined. We need to show that the image lies in $\Av_n(p')$. Each entry $x$ of $q$-rank $k$ is part of an occurrence of $q$ which involves $x$ and the entry immediately to its left. If not, it would be an occurrence of $p$. This also forces each entry immediately to the left of an entry of $q$-rank $k$ to be of $q$-rank $k-1$. This implies that two entries of $q$-rank $k$ are not adjacent. Hence, applying the transpositions in Step $1$ should give us a permutation that avoids $q$. Now, applying $f$ in Step $2$ takes it to a permutation that is in $\Av_n(q')$. Finally, applying the transposition again will create a permutation in $\Av_n(p')$, by the same argument. The elements of $q$-rank $k$ are formed only by these transpositions and therefore, the map $G$ defined earlier is the inverse of map $F$. Thus, we have exhibited a bijection from $\Av(p)$ to $\Av(p')$.
\end{proof}

\section{Proofs of Theorems \ref{thm: disjoint-sum-shape-w} and \ref{thm: disjoint-sum-wilf}}\label{sec: proofs-3-4}

We begin with the proof of Theorem \ref{thm: disjoint-sum-shape-w}.
\begin{proof}[Proof of Theorem \ref{thm: disjoint-sum-shape-w}]
We will prove the Wilf-equivalences $p+q\sim p+q'$ and 
$p+q \sim p'+q$. Putting them together, we will get the 
Wilf-equivalences:  $$p+q\sim p+q'\sim p'+q'.$$
We only show the first equivalence and omit the proof of the second as it is similar to the first. We will highlight the differences at the end of the proof.
Our proof is bijective.  Let $F$ be a square Ferrers board. 
Fix a transversal $f$ that avoids $p+q$. Notice that if $f$ avoids $p$, then $f$ avoids occurrences of both $p+q$ and $p+q'$.
If $f$ avoids $p$, then map it to itself.
    Otherwise, perform the following. We start with the squares without any colour.
    \begin{enumerate}
    \item Fix a bijection $\Lambda$ from the transversals avoiding $q$ to $q'$.
        \item For any square $(i,j)\in F$, if there is an occurrence of $p$ in the subboard of $F$ that is to the left of $(i,j)$, then we color it gray. Otherwise, we colour it white.
        \item Find the $1$'s coloured white and colour their corresponding rows white.
        \item Remove all the white squares from $F$, denoting this new board, obtained by left and bottom justifying the squares, by $F'$. $F'$ will be square as we start with a square Ferrers board. The induced transversal on $F'$ is denoted by $f'$. 
        \item Replace this transversal $f'$ by $f'':=\Lambda(f')$ and restore all the white squares from the places they were removed.
        \item The transversal that we have created avoids occurrences of $p+q'$.
        \item Map $f$ to this transversal. This gives the required map.
    \end{enumerate}
    For the second equivalence, in our proof, we just have to look for occurrences of $q'$ to the right of cells in Step $2$. The rest of the proof is identical and is omitted.
\end{proof}


Let $p,q$ be two POPs of size $k$ and $l$ respectively with $k+l=n$.  Thus, the size of $p+q$ is $n$. From the result of Gao and Kitaev \cite[Theorem 1]{gao-kitaev-length-4-5-pops}, the reverse $r(p)$ of any POP $p$ of size $n$ can be obtained by performing the complementation operation on its labels, that is, replacing a label $i$ by $n+1-i$. Therefore, $r(p+q)$ is the reverse of the POP $p+q$.

Let $P,Q$ be the underlying sets of the POPs $p,q$ respectively. Let $r_P(p+q)$ be the operation on $P\bigsqcup Q$ that replaces a label $i$ of $P$ by $k+1-i$ while keeping the labels on the 
set $Q$ fixed. Similarly, $r_Q(p+q)$ is the operation on $P\bigsqcup Q$ that replaces a label $i'$ of $Q$ by $l+1-i'$ while keeping the labels on the
set $P$ fixed. Our next result explores the effect of a composition of these reversal operations.
\begin{remark}
    Informally, the underlying set of the poset $p+q$ is $P\bigsqcup Q$. $r_P$ changes only the labels of $P$ in $p+q$ and $r_Q$ changes only the labels of $Q$ in $p+q$. We do not change the Hasse diagram of the underlying poset.
\end{remark}
\begin{lemma}\label{p+q to q+p}
      $r_Q(r(r_P(p+q))=q+p$.
\end{lemma}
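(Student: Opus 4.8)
The plan is to unwind the three operations one at a time, tracking what each does to a label in $P$ and to a label in $Q$ separately, since that is exactly how $r_P$, $r$, and $r_Q$ are defined (they act only on labels, never on the Hasse diagram, as the remark emphasizes). So the whole proof is really a bookkeeping computation on the two label sets $L_p = [k]$ and $L_q = [l]$, together with one observation about how the disjoint-sum labelling convention interacts with reversal. Throughout, by Gao--Kitaev the global reverse $r$ of a POP of size $n = k+l$ is complementation of labels: $i \mapsto n+1-i$.

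First I would recall the labelling convention: in $p+q$ the labels of $P$ are $1,\dots,k$ (unchanged from $p$) and the labels of $Q$ are $k+1,\dots,k+l$ (the labels $1,\dots,l$ of $q$ raised by $k$). Now apply $r_P$: a label $i \in [k]$ on $P$ becomes $k+1-i$; labels on $Q$ are untouched, so they remain $k+1,\dots,k+l = n$. Next apply the global reverse $r$, i.e. $j \mapsto n+1-j$ applied to \emph{every} label. A label of $P$, currently $k+1-i$, becomes $n+1-(k+1-i) = n-k+i = l+i$. A label of $Q$, currently of the form $k+j$ with $j \in [l]$, becomes $n+1-(k+j) = l+1-j$. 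So after $r\circ r_P$: the $P$-vertices carry labels $l+1,\dots,l+k$ (with the vertex that was labelled $i$ in $p$ now labelled $l+i$), and the $Q$-vertices carry labels $1,\dots,l$, with the vertex labelled $j$ in $q$ now labelled $l+1-j$.

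Finally apply $r_Q$, which complements only the $Q$-labels within $[l]$: $i' \mapsto l+1-i'$. The $Q$-vertex currently labelled $l+1-j$ becomes $l+1-(l+1-j) = j$ — so the $Q$-vertices recover exactly the labels $1,\dots,l$ they had in $q$. The $P$-vertices are untouched by $r_Q$ and keep labels $l+1,\dots,l+k$, with the vertex labelled $i$ in $p$ now labelled $l+i$. Comparing with the definition of $q+p$: there the $Q$-block keeps its original labels $1,\dots,l$ and the $P$-block has its labels raised by $\max(L_q) = l$, i.e. $i \mapsto l+i$. These match on the nose, and since none of $r_P$, $r$, $r_Q$ alters the underlying poset relations, $r_Q(r(r_P(p+q))) = q+p$ as labelled posets.

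I do not expect a genuine obstacle here; the one subtlety to be careful about is checking that $r$ really does send a \emph{reversed-then-disjoint-summed} poset $r_P(p+q)$ to a reversed poset in the sense of Gao--Kitaev — i.e. that the order-reversal of the permutation picture corresponds to label-complementation even on this composite object — but this is immediate because the Gao--Kitaev statement is about arbitrary POPs of a given size and $r_P(p+q)$ is just some POP of size $n$. The rest is the label arithmetic above; I would present it compactly as a short chain of "label $i$ of $P$ $\mapsto k+1-i \mapsto l+i \mapsto l+i$" and "label $j$ of $Q$ $\mapsto k+j \mapsto l+1-j \mapsto j$", then invoke the disjoint-sum convention to identify the result with $q+p$.
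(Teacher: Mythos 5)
Your proof is correct and follows essentially the same route as the paper: both track each label of $P$ and each label of $Q$ separately through the chain $r_P$, then $r$, then $r_Q$, and compare the resulting labels with the disjoint-sum convention for $q+p$. Your arithmetic ($i \mapsto k+1-i \mapsto l+i$ for $P$-labels and $k+j \mapsto l+1-j \mapsto j$ for $Q$-labels) is in fact slightly more careful than the paper's own intermediate steps, which contain an off-by-one slip, though both reach the same correct conclusion.
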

\begin{proof}
    Before starting the proof, note that any label $i$ and $i'$ of $P$ and $Q$ in $p+q$ become 
    $i+l$ and $i'-k$ in $q+p$ respectively. We prove the lemma in the following way. Let $i, j$ 
be two labels of elements $u,v$ in $P$ such that $u<_p v$ in $p+q$. Consider $r_P(p+q)$ where 
the labels $i,j$ become $k-i,k-j$. Now, consider $r(r_P(p+q))$. The labels become $n-k+i,n-k+j$ 
which is nothing but $l+i,l+j$. Lastly, if we consider $r_Q$, then it will not affect the labels 
of $P$. Hence, $i,j$ becomes $l+i,l+j$ in $r_Q(r(r_P(p+q))$. Therefore, the labels of 
$u,v \in P$ change from $i,j$ to $l+i,l+j$ 
respectively.  

 Now, we will follow the same steps for the labels of $q$. Consider two labels $i',j'$ from
 elements $u,v$ in $Q$ such that $u<_q v$ in $p+q$. If we consider $r_P$, it will not affect the 
 labels of elements in $Q$. Now, do the reverse $r$ of $p+q$. We have $n+1-i', n+1-j'$ for $u,v$ 
 in $r(r_P(p+q))$. Lastly, consider the operation $r_Q$. We have the labels $i'-k, j'-k$ for the 
 elements $u,v$ in $r_Q(r(r_P(p+q))$. Therefore, the labels of $u,v \in Q$ change 
 from $i',j'$ to $i'-k,j'-k$ respectively.  As this is the labeling of the labeled poset $q+p$, thiscompletes our proof.
\end{proof}
We illustrate Lemma \ref{p+q to q+p}.
Let $p=$ \begin{tikzpicture}[baseline=(current bounding box)]
		\filldraw  (0,.25) circle (2pt);
        \filldraw  (-.25,-.25) circle (2pt);
        \filldraw  (.25,-.25) circle (2pt);
		
		\draw (0,.25) -- (-.25,-.25); 
        \draw (0,0.25) -- (0.25,-.25); 
        \node[] at (0,.5)   { $1$};
		\node[] at (-.5,-.25)  { $2$};
		\node[] at (.5,-.25) { $3$};
        
\end{tikzpicture} and $q=$ \begin{tikzpicture}[baseline=(current bounding box)]
    \filldraw  (0,0.25) circle (2pt);
    \filldraw  (0,-.25) circle (2pt);
    \draw (0,.25) -- (0,-.25); 
    \node[] at (0.3,.25)   { $1$};
    \node[] at (0.3,-.25)   { $2$};
\end{tikzpicture}. Hence, $p+q=$ \begin{tikzpicture}[baseline=(current bounding box)]
		\filldraw  (0,.25) circle (2pt);
        \filldraw  (-.25,-.25) circle (2pt);
        \filldraw  (.25,-.25) circle (2pt);
        \filldraw  (0.8,0.25) circle (2pt);
        \filldraw  (.8,-.25) circle (2pt);
		
		\draw (0,.25) -- (-.25,-.25); 
        \draw (0,0.25) -- (0.25,-.25); 
        \draw (.8,0.25) -- (0.8,-.25); 
        \node[] at (0,.5)   { $1$};
		\node[] at (-.5,-.25)  { $2$};
		\node[] at (.5,-.25) { $3$};
        \node[] at (1,.25)   { $4$};
        \node[] at (1,-.25)   { $5$};
\end{tikzpicture}.

\begin{tikzpicture}[baseline=(current bounding box)]
		\filldraw  (0,.25) circle (2pt);
        \filldraw  (-.25,-.25) circle (2pt);
        \filldraw  (.25,-.25) circle (2pt);
        \filldraw  (.8,0.25) circle (2pt);
        \filldraw  (0.8,-.25) circle (2pt);
		
		\draw (0,.25) -- (-.25,-.25); 
        \draw (0,0.25) -- (0.25,-.25); 
        \draw (0.8,0.25) -- (.8,-.25); 
        \node[] at (0,.5)   { $1$};
		\node[] at (-.5,-.25)  { $2$};
		\node[] at (.5,-.25) { $3$};
        \node[] at (1,.25)   { $4$};
        \node[] at (1,-.25)   { $5$};
        
\end{tikzpicture} $\xlongrightarrow{r_p} $ \begin{tikzpicture}[baseline=(current bounding box)]
		\filldraw  (0,.25) circle (2pt);
        \filldraw  (-.25,-.25) circle (2pt);
        \filldraw  (.25,-.25) circle (2pt);
        \filldraw  (.8,0.25) circle (2pt);
        \filldraw  (0.8,-.25) circle (2pt);
		
		\draw (0,.25) -- (-.25,-.25); 
        \draw (0,0.25) -- (0.25,-.25); 
        \draw (0.8,0.25) -- (.8,-.25); 
        \node[] at (0,.5)   { $3$};
		\node[] at (-.5,-.25)  { $2$};
		\node[] at (.5,-.25) { $1$};
        \node[] at (1,.25)   { $4$};
        \node[] at (1,-.25)   { $5$};
\end{tikzpicture} $\xlongrightarrow{r} $  \begin{tikzpicture}[baseline=(current bounding box)]
		\filldraw  (0,.25) circle (2pt);
        \filldraw  (-.25,-.25) circle (2pt);
        \filldraw  (.25,-.25) circle (2pt);
        \filldraw  (.8,0.25) circle (2pt);
        \filldraw  (0.8,-.25) circle (2pt);
		
		\draw (0,.25) -- (-.25,-.25); 
        \draw (0,0.25) -- (0.25,-.25); 
        \draw (0.8,0.25) -- (.8,-.25); 
        \node[] at (0,.5)   { $3$};
		\node[] at (-.5,-.25)  { $4$};
		\node[] at (.5,-.25) { $5$};
        \node[] at (1,.25)   { $2$};
        \node[] at (1,-.25)   { $1$};
\end{tikzpicture} $\xlongrightarrow{r_q} $  \begin{tikzpicture}[baseline=(current bounding box)]
		\filldraw  (0,.25) circle (2pt);
        \filldraw  (-.25,-.25) circle (2pt);
        \filldraw  (.25,-.25) circle (2pt);
        \filldraw  (-.8,0.25) circle (2pt);
        \filldraw  (-.8,-.25) circle (2pt);
		
		\draw (0,.25) -- (-.25,-.25); 
        \draw (0,0.25) -- (0.25,-.25); 
        \draw (-.8,0.25) -- (-.8,-.25); 
        \node[] at (0,0.5)   { $3$};
		\node[] at (-.8,-.5)  { $2$};
		\node[] at (.4,-.5) { $5$};
        \node[] at (-.8,.5)   { $1$};
        \node[] at (-.35,-.5)   { $4$};
        \node[] at (1.25,0)   { $=q+p.$};
\end{tikzpicture}
We are now ready to prove Theorem \ref{thm: disjoint-sum-wilf}.

\begin{proof}[Proof of Theorem \ref{thm: disjoint-sum-wilf}]
    Let $rev(\pi)$ be the reverse of the permutation $\pi$. 
    If a permutation $\pi$ avoids a POP $p$, then $rev(\pi) $ will avoid $r(p)$. Hence, $p \sim r(p)$. By Theorem \ref{thm: disjoint-sum-shape-w}, we have the following. 
    $$p+q \sim r(p)+q =r_P(p+q) \sim r(r_P(p+q))$$ 
Now, let $r|_P, r|_Q$ be the effect of $r$ on the labels on elements of $P,Q$ respectively. We have
    $$r(r_P(p+q)) = r|_P(r_P(p))+ r|_Q(r_P(q)) \sim r|_P(r_P(p))+ r_Q(r|_Q(r_P(q)).$$ 
    Since $r_Q$ does not affect the labels on $P$, we have
    $$r|_P(r_P(p))+ r_Q(r|_Q(r_P(q))=r_Q(r(r_P(p))+ r_Q(r(r_P(q))=r_Q(r(r_P(p+q)).$$ By Lemma \ref{p+q to q+p}, we have $r_Q(r(r_P(p+q))=q+p$. Hence, $p+q \sim q+p$, completing the proof.
\end{proof}


\section{Classification of POPs of size $3$ whose components are chains }\label{sec: len3}

Before we start the classification of POPs of size $3$, $4$ and $5$ all of whose components are chains, we recall the following result of Gao and Kitaev  \cite[Theorem 4]{gao-kitaev-length-4-5-pops} 
that describes the effect of having isolated vertices with labels that are completely smaller or larger than the labels of the remaining vertices.

\begin{theorem}[Gao and Kitaev]
\label{iso_ver_beginning_end}
    Let \( p \) be a POP of size \( k \geq 1 \), and suppose the set of labels of isolated vertices includes
\[
I = \{1, 2, \ldots, i\} \cup \{k - s + i + 1, k - s + i + 2, \ldots, k\}
\]
for integers \( 0 \leq i \leq s \leq k \). Let \( q \) be the POP obtained from \( p \) by removing the elements corresponding to the labels in \( I \). Then,
\[
|\Av_n(p)| = 
\begin{cases} 
\displaystyle \frac{n!}{(n - s)!} \, |\Av_{n - s}(q)| & \text{if } n < k, \\[10pt]
0 & \text{if } n \geq k.
\end{cases}
\]
\end{theorem}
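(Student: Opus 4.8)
The plan is to establish the displayed identity by an explicit bijection, after one structural observation about occurrences of $p$; the two cases of the statement are then a matter of evaluating the right-hand side, as I indicate at the end.

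The observation is that in any occurrence of a POP the vertex labelled $j$ is matched to the $j$-th of the chosen positions, since the positions are listed in increasing order while the order comparisons defining an occurrence are indexed by the labels $1,\dots,k$. Hence, if an occurrence of $p$ in $\pi\in\SSS_n$ uses the positions $i_1<\dots<i_k$, then the $i$ isolated vertices labelled $1,\dots,i$ occupy $i_1<\dots<i_i$, the $s-i$ isolated vertices labelled $k-s+i+1,\dots,k$ occupy $i_{k-s+i+1}<\dots<i_k$, and the middle $k-s$ positions $i_{i+1}<\dots<i_{k-s+i}$ carry an occurrence of $q=\st(p\backslash I)$, with \emph{no} constraint on the entries in the first $i$ and last $s-i$ of these positions because those vertices are isolated. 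Now $i_{i+1}\ge i+1$ and $i_{k-s+i}\le n-(s-i)$, so this $q$-occurrence sits inside the window of positions $W:=\{i+1,\dots,n-s+i\}$; conversely, \emph{any} occurrence of $q$ at positions inside $W$ extends to an occurrence of $p$ by prepending the positions $1,\dots,i$ and appending $n-s+i+1,\dots,n$, which are disjoint from $W$ and carry isolated, hence unconstrained, vertices. Therefore, for $n\ge s$, one has $\pi\in\Av_n(p)$ if and only if the standardisation of the subword $\pi_{i+1}\pi_{i+2}\cdots\pi_{n-s+i}$ avoids $q$.

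With this equivalence, I would conclude as follows. A permutation $\pi\in\SSS_n$ is exactly the same data as (i) the pattern $w\in\SSS_{n-s}$ of $\pi$ on the window $W$ and (ii) the ordered list of the $s$ values that $\pi$ assigns, left to right, to the $s$ outer positions $\{1,\dots,i\}\cup\{n-s+i+1,\dots,n\}$; this is a bijection $\SSS_n\leftrightarrow\SSS_{n-s}\times T$, where $T$ is the set of ordered $s$-tuples of distinct elements of $[n]$, and $|T|=n!/(n-s)!$. By the previous paragraph the condition $\pi\in\Av_n(p)$ constrains only $w$, to lie in $\Av_{n-s}(q)$, so the bijection restricts to $\Av_n(p)\leftrightarrow\Av_{n-s}(q)\times T$ and hence $|\Av_n(p)|=\frac{n!}{(n-s)!}\,|\Av_{n-s}(q)|$. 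For $s\le n<k$ the window has length $n-s<k-s=|q|$, so every length-$(n-s)$ word avoids $q$ and the count is $n!$ (and for $n<s$ the statement reduces to the trivial $|\Av_n(p)|=n!$); for $n\ge k$ the right-hand side is $0$.

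The step I expect to be the main obstacle is getting the ``if and only if'' of the second paragraph exactly right --- in particular its converse half. This is precisely where the hypothesis that the isolated vertices carry the $i$ smallest and the $s-i$ largest labels is used: it forces them to the extreme positions of an occurrence, never interleaved with the core of $q$, so that a $q$-occurrence \emph{anywhere} inside the window $W$ automatically leaves room for them on both sides. For isolated vertices with interior labels this fails, and the reduction to $q$ would be false. Once the equivalence is set up correctly, the bijective count of the third paragraph is routine book-keeping.
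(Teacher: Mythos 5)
The paper does not actually prove this statement: it is imported verbatim (as Theorem~4 of Gao and Kitaev) and used as a black box, so there is no in-paper proof to compare against. On its own merits, your argument is correct and is the standard proof of the Gao--Kitaev result: since label $j$ is always matched to the $j$-th chosen position, the isolated vertices with the $i$ smallest and $s-i$ largest labels occupy the extreme positions of any occurrence and carry no order constraints, so containment of $p$ is equivalent to containment of $q$ by the standardised subword on the window $\{i+1,\dots,n-s+i\}$; the factorisation $\SSS_n\leftrightarrow\SSS_{n-s}\times T$ with $|T|=n!/(n-s)!$ then gives $|\Av_n(p)|=\frac{n!}{(n-s)!}\,|\Av_{n-s}(q)|$ for all $n\ge s$. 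Your identification of where the hypothesis on the labels is used (the converse direction of the window equivalence) is exactly right.

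The one step that is genuinely wrong is your closing clause, and it points at a misprint in the statement as reproduced here. The displayed case split cannot be correct as printed: taking $i=s=0$ gives $I=\emptyset$ and $q=p$, so the line ``$0$ if $n\ge k$'' would assert that every permutation of length at least $k$ contains every POP of size $k$, which is false (e.g.\ $|\Av_n(123)|=C_n$). The correct statement --- and the one your bijection proves, and the one the paper actually invokes when it deduces Wilf-equivalences from this theorem --- attaches the product formula to $n\ge k$ (indeed to all $n\ge s$) and gives the trivial value $n!$ for $n<k$. Your sentence ``for $n\ge k$ the right-hand side is $0$'' neither follows from your argument nor is true in general; rather than trying to match the printed cases, you should note that they have been transposed/garbled relative to the source. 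Everything before that sentence stands.
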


We move on to the proof of Theorem \ref{thm: len3-B12-B21}. We give an encoding scheme that maps a tranversal avoiding $p$ (similarly, $p'$) of a Ferrers board $F$ to certain words on the alphabet $\{0,1,2\}$. We will use this encoding scheme to give a bijection between $\Av(p)$ and $\Av(p')$ for all Ferrers boards.

Start with an empty Ferrers board whose cells are white. We insert $1$'s in each row of the Ferrers board $F$ going from top to bottom as follows. Suppose we have placed a $1$ in the top row, we color the row and column containing the $1$ gray. Once $k$ topmost rows are gray meaning $1$'s have been placed in these rows, we place the next $1$ in the topmost row with white cells. Subsequently, the row and column where it was placed will be colored gray.

When we insert in this manner to obtain a transversal that avoids $p$ or $p'$, there are some constraints. Call a position/cell \emph{suitable} if placing $1$ in the position does not form an occurrence of $p$ or $p'$ with past or future placements.

\begin{lemma}\label{lem: atmost2}
At any stage of the process, in the topmost row of white cells, there are at most two suitable cells to place $1$ avoiding $p$ or $p'$. For $p$, it is the first and second cells on the top row. For $p'$, it is the first and last cells on the top row.
\end{lemma}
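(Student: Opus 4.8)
The plan is to reduce avoidance of the two POPs to classical pattern avoidance and then run a short bounding-box argument inside $F$. Avoiding one of the POPs amounts to avoiding, in the transversal, each of the classical patterns $123,213,312$, and avoiding the other amounts to avoiding each of $132,231,321$ (these two triples are the inverses of the linear extensions of the two three-element posets, as recalled in the introduction); the first case is the one in which the statement claims the suitable cells are the first and second of the current row, the second the case of the first and last. The feature I would use is that all of $123,213,312$ have an ascent in their last two positions, while all of $132,231,321$ have a descent there. Writing a transversal as $\pi$, with the $1$ of column $c$ in row $\pi_c$: an occurrence of the ``ascent'' POP is a triple of columns $a<b<c$ with $\pi_b<\pi_c$ whose $3\times3$ box lies in $F$, and an occurrence of the ``descent'' POP is such a triple with $\pi_b>\pi_c$; because $F$ is a French-notation Ferrers board, the box lies in $F$ as soon as its top-right corner — the cell in the largest of the three rows and in column $c$ — does.

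Now examine one step of the insertion. Let $r$ be the row being filled; the rows already processed sit above it and hold larger values, and any white cell of row $r$ other than the one we pick will, in every completion, be used by a row processed later, hence below row $r$, hence hold a value less than $r$. Number the white cells of row $r$ from the left. For the ``ascent'' POP, suppose the $1$ is placed in the white cell $c$ that is third or later, and let $w_1<w_2<c$ be the two leftmost white cells; in any completion $\pi_{w_1},\pi_{w_2}<r=\pi_c$, so on columns $w_1<w_2<c$ the triple $(\pi_{w_1},\pi_{w_2},r)$ is order-isomorphic to $123$ or $213$, and its corner is the cell $(r,c)$, which is in $F$ since $c\le\ell_r$. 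Hence every completion contains the POP, so $c$ is not suitable, and only the first two white cells of row $r$ can be suitable. For the ``descent'' POP, symmetrically: if the $1$ is placed in an interior white cell $c$, with white cells $c^-<c<c^+$ on both sides, then in every completion $\pi_{c^-},\pi_{c^+}<r=\pi_c$, the triple $(\pi_{c^-},r,\pi_{c^+})$ on columns $c^-<c<c^+$ is order-isomorphic to $132$ or $231$, with corner $(r,c^+)\in F$ since $c^+\le\ell_r$; so only the first and the last white cell can be suitable. Either way at most two cells survive, which is the claim.

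The point I expect to require the most care is the assertion used throughout: that in \emph{every} completion the white cells of row $r$ other than the chosen one receive values below $r$. This is exactly where the bookkeeping of the insertion enters — a white cell of row $r$ is a column $\le\ell_r$ left unused by the rows processed so far, so in the finished transversal it must be used by a row not yet processed, i.e.\ a row strictly below $r$, whose value is therefore less than $r$. Once that is nailed down the rest is routine: the flanking (resp.\ preceding) white cells exist by the meaning of ``interior'' (resp.\ ``third or later''), the forced triple is one of the three listed patterns because the value $r$ just placed is the largest of the three, and the only geometric input is that the corner of the forced box lies in the very row $r$ that was just filled, hence in $F$, hence — by the Ferrers property — so does the whole box.
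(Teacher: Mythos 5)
Your proof is correct and takes essentially the same approach as the paper's: placing the current (largest remaining) value at a skipped white cell of the top row forces, together with the smaller values that the remaining white columns of that row must receive in every completion, an occurrence of the relevant triple of classical patterns whose bounding box is anchored at the just-filled cell $(r,c)$ (resp.\ $(r,c^+)$) and hence lies in $F$. You merely spell out the reduction to $\{123,213,312\}$ versus $\{132,231,321\}$ and the corner-cell check that the paper's two-sentence argument leaves implicit.
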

\begin{proof} At the current stage, if the topmost row has size more than $3$, to avoid $p$, we cannot place the $1$ outside of the first two columns. If we did, the $1$'s that would be placed in the first two columns in the future, along with this $1$ would form an occurrence of $p$.
Similarly, to avoid $p'$, we cannot place $1$ outside the first or last column. 
\end{proof}

Next, we examine the case when there is at most $1$ suitable position available in the top row. An instance is when there is exactly one white cell in the top row. Another one is described in the following lemma. 
\begin{lemma}\label{lem: rect}
    Let $\ell(c)$ denote the size of the column $c$. At some stage of the process for $p$ or $p'$, let $F'$ be the board made up of the remaining white cells. Suppose that the top row of $F'$ comes from the columns $c_1<\dots<c_l$ of $F$. If the rectangle with corners $(1,1), (c_l,1), (c_l, \ell(c_l))$, $(1,\ell(c_l))$ (corners included) already contains a $1$ to the left of $c_1$, then there is at most one suitable position. To avoid $p$, it is the leftmost position on the top row. To avoid $p'$, it is the rightmost position on the top row.
\end{lemma}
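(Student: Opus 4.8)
\textbf{Proof proposal for Lemma \ref{lem: rect}.}

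The plan is to show that the hypothesis — the presence of a previously placed $1$ inside the rectangle with corners $(1,1)$, $(c_l,1)$, $(c_l,\ell(c_l))$, $(1,\ell(c_l))$ and strictly to the left of column $c_1$ — already kills all but one of the two candidate positions identified in Lemma \ref{lem: atmost2}. First I would treat the case of $p$ (with the two candidates being the leftmost cell and the second-from-left cell of the top row, namely columns $c_1$ and $c_2$). Suppose, toward a contradiction, that we place the $1$ in column $c_2$. Let $u$ be the $1$ that the hypothesis guarantees: it sits in some column $c < c_1$, in some row $\rho$ with $\rho \le \ell(c_l)$, so in particular the cell $(\text{row},\, c)$ lies weakly below the top row of $F'$ and hence weakly below where we are about to place. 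The point is that $u$ lies strictly to the left and weakly below the new $1$ in column $c_2$, while the future $1$ that will eventually be placed in column $c_1$ (which is still white, since $c_1$ is the leftmost column of $F'$, hence has not yet received a $1$) lies strictly between them in column index and will lie above the new $1$ as well. I would check that these three $1$'s — $u$, the future $1$ in column $c_1$, and the new $1$ in column $c_2$ — are positioned so as to form an occurrence of the pattern $p = $ (isolated vertex $1$, together with the chain $2 < 3$), which requires a left entry unrelated in the pattern poset to the other two, and then two entries with the later one larger. Since in the POP $p$ vertex $1$ is isolated, \emph{any} relative height of $u$ is allowed; we only need $u$ to the left, and then the column-$c_1$ entry below the column-$c_2$ entry — but we must be careful about which of the latter two is larger, so the clean statement is that one of the two orderings of the last two 1's realizes $23$ as required, and the remaining orderings are excluded precisely because the cells involved must lie inside $F$, which the rectangle hypothesis secures. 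Hence column $c_2$ is not suitable, leaving only column $c_1$.

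The argument for $p'$ is the mirror image: here Lemma \ref{lem: atmost2} leaves the leftmost cell (column $c_1$) and the rightmost cell (column $c_l$) as the only candidates, and $p'$ is (isolated vertex $1$, chain $3 < 2$, i.e. the "descent" version). Placing the $1$ in column $c_1$ together with the guaranteed $1$ at column $c < c_1$ and a suitable future $1$ would create an occurrence of $p'$ — one checks that the rectangle hypothesis is exactly what guarantees all the relevant cells lie inside $F$, so the potential pattern is genuinely realizable and therefore forbidden. This leaves only column $c_l$ as suitable. I would phrase both halves uniformly by invoking the reversal/complementation symmetry that swaps $p$ and $p'$, so that only one direction needs to be written out in full.

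The main obstacle, and the step I would be most careful with, is pinning down exactly which cells the placed and to-be-placed $1$'s occupy and verifying they all fall inside the Ferrers board $F$ — this is precisely the role of the rectangle corners $(1,1)$, $(c_l,1)$, $(c_l,\ell(c_l))$, $(1,\ell(c_l))$ in the hypothesis, since Ferrers shapes are only weakly staircase and one cannot assume a cell is present just because cells to its left and below are. Concretely I need: (i) column $c_1$ is still white and will receive a future $1$ somewhere at or below the current top row (true because $c_1$ is the first column of $F'$); (ii) the cell where that future $1$ goes, the cell of the guaranteed $1$ $u$, and the new $1$ all lie within $F$ and are correctly ordered in both coordinates to match the Hasse diagram of $p$ (resp.\ $p'$); (iii) the isolatedness of vertex $1$ removes any height constraint on the leftmost of the three entries, which is what makes the forbidden configuration unavoidable. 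Once the geometry is set up correctly, the pattern-matching itself is a short case check, and the conclusion — at most one suitable position, namely the leftmost for $p$ and the rightmost for $p'$ — follows immediately.
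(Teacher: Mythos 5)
Your overall strategy coincides with the paper's: take the two candidates supplied by Lemma \ref{lem: atmost2} and show that the guaranteed $1$ inside the rectangle, together with the disallowed candidate and a $1$ yet to be placed, assembles into an occurrence of $p$ (resp.\ $p'$), the rectangle being exactly what forces the relevant cells into $F$. The paper's own proof is a single sentence saying precisely this. However, your execution contains a concrete directional error that breaks the key verification. The insertion process fills rows of $F$ from the top down, so every $1$ already placed --- in particular the guaranteed $1$ $u$ at some $(\rho,a')$ with $a'<c_1$ --- sits \emph{strictly above} the current top white row $r$ (indeed $\rho>r$; the bound $\rho\le\ell(c_l)$ coming from the rectangle does not put $u$ at or below row $r$, since $\ell(c_l)\ge r$), while every $1$ yet to be placed --- in particular the one destined for column $c_1$ --- will land \emph{strictly below} row $r$. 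You assert the opposite on both counts (``$u$ lies \dots weakly below the new $1$''; ``the future $1$ \dots in column $c_1$ \dots will lie above the new $1$''), and these two facts are exactly what the proof needs. First, the future $1$ in $c_1$ lying below the new $1$ in $c_2$ is what makes the pair $(c_1,c_2)$ realize the chain of $p$ in the required order; with your orientation the pair would realize the chain of $p'$ instead, and your hedge that ``one of the two orderings of the last two $1$'s realizes $23$ as required'' defers precisely the point at issue --- the ordering is forced, not optional, and it is forced by the top-to-bottom direction of the insertion. Second, $u$ being the \emph{highest} of the three $1$'s is what makes $(\rho,c_2)$ the binding top-right cell of the pattern submatrix, so that $\rho\le\ell(c_l)\le\ell(c_2)$ is exactly the condition needed to place that cell inside $F$; with your picture ($u$ weakly below the current row) the binding cell would sit in the current row and the rectangle hypothesis would play no role, which should have been a warning sign.

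Two smaller points. For the $p'$ half you need to name the third entry: the current $1$ at $(r,c_1)$ is the high middle entry, and the $1$ that will later land in any white column $c_j$ with $j\ge 2$ of the top row supplies the low right entry; since $\ell(c_j)\ge\ell(c_l)\ge\rho$, the cell $(\rho,c_j)$ lies in $F$ and the occurrence of $p'$ is genuine. Finally, the symmetry swapping $p$ and $p'$ is complementation, which reflects the board vertically and destroys both the Ferrers shape and the top-to-bottom insertion order, so it cannot be used to deduce one half of the lemma from the other; the two cases should simply be argued separately, as above.
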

\begin{proof}
    This follows from the previous lemma, as the $1$ to the left of $c_1$ in the rectangle will form an occurrence of $p$ or $p'$ if placed in the other suitable position.
\end{proof}
\begin{remark}
    Outside of the two cases described above, there are none other. If there is no $1$ in the rectangle described above and there are more than $2$ white cells in the top row, there are always two suitable positions. Otherwise, there had to be a $1$ in the rectangle described in Lemma \ref{lem: rect}.
\end{remark}
\begin{lemma}\label{lem: atleast1}
    At any stage of the process, in the topmost row of white cells, there is at least one suitable position to place $1$ avoiding $p$ or $p'$.
\end{lemma}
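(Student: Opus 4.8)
The plan is to show that the leftmost white cell in the topmost white row is always suitable, i.e., placing a $1$ there never creates an occurrence of $p$ or $p'$. Recall that both $p$ and $p'$ consist of an isolated vertex labeled $1$ together with a two-element chain on the vertices labeled $2,3$ (the chain being $2<3$ for $p$ and $3<2$ for $p'$), and that in the insertion process we place $1$'s row by row from the top, graying out the occupied row and column each time. So an occurrence of $p$ (respectively $p'$) in the final transversal is a set of three $1$'s, one of which is unconstrained relative to the other two, while the remaining two form a $12$ (respectively $21$) pattern. Since rows are filled strictly top to bottom, for the cell currently being filled every previously placed $1$ lies in a strictly higher row and every future $1$ lies in a strictly lower row; this is the structural fact I will exploit throughout.

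First I would set up notation: let the current top white row be row $i$, let its white cells occupy columns $c_1 < c_2 < \dots < c_l$ of $F$, and suppose we place the new $1$ at $(i,c_1)$, the leftmost white cell. I must rule out that this $1$ participates in an occurrence of $p$ or of $p'$ together with two other $1$'s of the final transversal. The key observation is that any $1$ in the final transversal distinct from the one at $(i,c_1)$ lies either in a row above $i$ (a past placement) or in a row below $i$ (a future placement), and if it is a future placement its column is among $c_2, \dots, c_l$ (strictly to the right of $c_1$), since all columns to the left of $c_1$ that meet row $i$ are already gray. I would then split into cases according to which of the three roles — the isolated vertex, or one of the two ends of the chain — the cell $(i,c_1)$ plays, and in each case derive a contradiction with Lemma \ref{lem: atmost2} or Lemma \ref{lem: rect}: for instance, if $(i,c_1)$ were the bottom of the chain together with a $1$ above it and to its left (for $p$) or above and to its right (for $p'$), one checks this $1$ would have had to be placed in a cell that Lemma \ref{lem: atmost2}/\ref{lem: rect} already forbade; and future $1$'s sitting to the right of $c_1$ in lower rows are exactly the configuration that forced $c_1$ to be the unique suitable position in Lemma \ref{lem: rect}, so using $c_1$ cannot itself be the offending choice. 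Essentially, since there was a suitable position before this step (which is what we are inductively maintaining), and the leftmost cell is always one of the at-most-two candidates identified in Lemma \ref{lem: atmost2}, the only way $c_1$ could fail is if the rectangle of Lemma \ref{lem: rect} already contained a $1$ to the left of $c_1$ — but then $c_1$ is declared the unique suitable position, so it is suitable.

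The cleanest way to organize this is: by Lemma \ref{lem: atmost2} the suitable positions in the top white row are among $\{c_1, c_2\}$ for $p$ and among $\{c_1, c_l\}$ for $p'$; by Lemma \ref{lem: rect} and the following remark, if there is a $1$ in the governing rectangle to the left of $c_1$ then $c_1$ is itself suitable, and otherwise \emph{both} of the two candidate cells are suitable, so in particular $c_1$ is. Hence in every case $c_1$ is suitable, giving at least one suitable position. I expect the main obstacle to be bookkeeping: carefully arguing that placing $1$ at $c_1$ cannot retroactively create an occurrence using two \emph{earlier} placements plus $c_1$ (as opposed to using future placements) — this requires noting that any such earlier pair together with a new cell in row $i$ leftmost column would itself have been an occurrence recognizable when those earlier cells were placed, contradicting that the process never placed a $1$ in a non-suitable cell. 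Once that is spelled out, combining with Lemmas \ref{lem: atmost2} and \ref{lem: rect} closes the argument.
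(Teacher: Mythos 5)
There is a genuine gap, and it is in the central claim of your plan: the assertion that the \emph{leftmost} white cell of the topmost white row is always suitable \emph{for both} $p$ and $p'$ is false for $p'$. The two patterns are not treated symmetrically anywhere in this section. Lemma \ref{lem: atmost2} already records that the candidate cells are the first and \emph{second} for $p$ but the first and \emph{last} for $p'$, and Lemma \ref{lem: rect} states that in the forced situation the unique candidate is the leftmost cell for $p$ but the \emph{rightmost} cell for $p'$. The paper's proof of Lemma \ref{lem: atleast1} respects this asymmetry (leftmost position for $p$, rightmost position for $p'$), whereas your argument collapses both cases onto the leftmost cell. A concrete counterexample sits inside the paper's own worked example on the board $(5,5,4,4,3)$: in the $p'$-insertion, after the $1$'s at $(5,3)$ and $(4,1)$ have been placed, the topmost white row is row $3$ with white cells in columns $2$ and $4$. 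Placing the next $1$ in the leftmost white cell $(3,2)$ produces, together with the earlier $1$ at $(4,1)$ and the future $1$ of column $4$ (which must land in row $1$ or $2$), a copy of $321$ all of whose nine cells lie in the board, i.e.\ an occurrence of $p'$. So the leftmost cell is not suitable there; only the rightmost one is, which is exactly why the encoding letter at that stage is $0$ and the $1$ is placed at $(3,4)$.

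A second, related problem is that your fallback argument is circular. Lemma \ref{lem: rect} is an ``at most one'' statement: when the governing rectangle contains a $1$ to the left of $c_1$, it says that any suitable position must be the leftmost cell (for $p$) or the rightmost cell (for $p'$); it does not assert that this cell \emph{is} suitable. Reading ``$c_1$ is declared the unique suitable position, so it is suitable'' into that lemma assumes exactly the existence statement that Lemma \ref{lem: atleast1} is meant to supply --- and, for $p'$, it additionally misidentifies which cell the lemma singles out. Likewise, ``there was a suitable position before this step, which we are inductively maintaining'' is not an argument for the current step. What is needed (and what the paper asserts, albeit very tersely) is a direct verification that placing a $1$ in the leftmost white cell can never complete an occurrence of $p$, and that placing one in the rightmost white cell can never complete an occurrence of $p'$, using the facts that all previously placed $1$'s sit in higher rows in gray columns, all future $1$'s sit in lower rows in the currently white columns, and the Ferrers-board containment condition rules out the remaining dangerous configurations. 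Your case analysis for past versus future placements is the right kind of bookkeeping, but it must be carried out for the correct cell in each of the two cases.
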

\begin{proof}
    To avoid $p$, we can always place $1$ in the leftmost position on the top row of the white cells. To avoid $p'$, we can always place $1$ in the rightmost position on the top row of the white cells. This position will be suitable.
\end{proof}

Using the above scheme, we create an encoding that maps tranversals that avoid $p$ to those that avoid $p'$. The encoding will be a word with the letters $\{0,1,2\}$, obtained in the following way. Let $T$ be a transversal of $F$ that avoids $p$. 
\begin{enumerate}
     \item Let $w=w_1\dots w_n$ be the encoding word.
    \item Remove all the $1$'s and place them back on the board according the process described above.
    \item While placing $1$ on the row $i$ from the top, if there were two suitable positions on the top row and $1$ appears in the first suitable position from the left in $T$, let $w_i=1$. If it appears in the second suitable position from the left in $T$, let $w_i=2$.
    \item While placing $1$ in row $i$, if there were only one suitable position on the top row, then let $w_i=0$.
    \item Once we have placed all the $1$'s, return $w=w_1\dots w_n$.
\end{enumerate}

Now, to obtain a transversal of $F$ that avoids $p'$, we invert the encoding as follows.
\begin{enumerate}
    \item Let $w=w_1\dots w_n$ be the encoding word.
    \item If $w_i=1$, place $1$ in the first suitable position from the left, in the topmost row of white cells. If $w_i=2$, place $1$ in the second suitable position, from the left, in the topmost row of white cells.
    \item If $w_i=0$, place it in the only suitable position available in the top row of white cells.
    \item Return the transversal once all $1$'s are placed.
\end{enumerate}

Similarly, we may start with a transversal that avoids $p'$, obtain an encoding word and then, invert it to get a transversal that avoids $p$. We will prove that these are inverses.

In the insertion process for $p$ or for $p'$, the shape of the board of white cells after $i$ iterations will be the same (up to squashing and left aligning). Therefore, if there were only one white cell on the top row for $p$, it would be the case for $p'$ and vice versa. We will often talk about the Ferrers board of white cells. Even though, they are not technically Ferrers boards, we use it to mean the Ferrers board obtained by squashing and left aligning the white cells. We will call an insertion stage \emph{forced} if there is only one suitable cell to insert at.
\begin{lemma}\label{lem: forced-descent}
    At some stage $i$, if $w_i=1$ and the size of the top most row of white cells is strictly greater than $2$, then the subsequent placements are forced until a stage is encountered where the size of the top row of white cells is exactly $2$. The stage after that is not forced unless there is only one white cell in the top row. This happens at the same stage for $p$ and $p'$.
\end{lemma}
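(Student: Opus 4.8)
The plan is to argue entirely inside the insertion process for $p$. Everything in sight has a left--right mirror image: Lemmas~\ref{lem: atmost2}, \ref{lem: rect}, \ref{lem: atleast1} and the Remark following Lemma~\ref{lem: rect} all have $p'$-versions obtained by reflecting ``leftmost'' to ``rightmost''. So the statement for $p'$ follows by this symmetry, and the final sentence of the lemma (``this happens at the same stage'') is then immediate from the fact recorded just above it, that the shape of the white-cell board after $i$ iterations is the same for the $p$- and $p'$-processes.

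The first step is to pin down precisely when a stage is \emph{forced}. Combining Lemma~\ref{lem: atmost2}, Lemma~\ref{lem: rect} and the Remark following it, a stage fails to be forced exactly when its top white row has at least two white cells \emph{and} the rectangle of Lemma~\ref{lem: rect} — spanning all columns up to the rightmost white cell $c_l$ of that row, and all rows of $F$ meeting column $c_l$ — contains no previously placed $1$ strictly to the left of the leftmost white cell $c_1$. I would also record two elementary size facts: if $r=n+1-i'$ is the row processed at stage $i'$, then the top white row at stage $i'$ has exactly $\lambda_r-i'+1$ white cells (the $i'-1$ already-used columns all lie inside $[\lambda_r]$, since they come from shorter, higher rows), so this count can decrease by at most $1$ when $i'$ increases by $1$ (it may jump up when $F$ widens as one moves down a row). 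Consequently, starting from stage $i$ where the size is $l\ge 3$, there is a well-defined first later stage $j$ at which the size is exactly $2$, and the size is $\ge 3$ at every stage strictly between $i$ and $j$.

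The core of the proof is the claim that every stage $i'$ with $i<i'\le j$ is forced, proved by induction on $i'$. Because $w_i=1$ and $l>2$, the $1$ at stage $i$ is placed in the leftmost white column $c_1$; and at any subsequent stage that is (inductively) known to be forced, its $1$ is likewise placed in the then-leftmost white column. The induction hypothesis I would carry is: there is a $1$ already on the board lying inside the rectangle attached to stage $i'$ and strictly to the left of that stage's leftmost white cell — the witness being $c_1$'s $1$, or the most recently deposited leftmost $1$. The inductive step reduces to a single geometric check: that the rightmost white column at stage $i'$, regarded as a column of $F$, still extends upward at least to the row holding the witness, so that the witness falls inside the vertical extent of the rectangle. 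I would show this persists exactly as long as the top white row has not yet narrowed to size $\le 2$, which in particular covers stage $j$ itself. For stage $j+1$: if its top white row has only one white cell it is forced by definition; otherwise I would verify the complementary geometric fact — that after the forced run $F$ has ``opened up'' enough that the witnessing $1$ now sits strictly above every row captured by the rectangle of stage $j+1$, so that rectangle is empty of $1$'s, and the Remark then produces two suitable cells, i.e.\ stage $j+1$ is not forced.

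The step I expect to be the main obstacle is exactly this geometric control of the rectangle under the widening of $F$: one must prove, in a single coherent piece of bookkeeping, that the witnessing $1$ stays inside the relevant rectangle throughout the forced run (making each such stage forced) yet falls out of it as soon as the run ends (so the next stage regains a genuine choice). Concretely this means comparing the heights $\ell(\,\cdot\,)$ of the successive rightmost white columns against the rows in which the forced $1$'s were deposited, and the delicate case — the one deserving the most care — is when a lower row of $F$ is strictly longer than the current one, which is what forces the alignment to break down precisely at the termination of the run.
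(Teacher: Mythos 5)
The main problem is your opening move: there is no left--right mirror symmetry exchanging the $p$-process and the $p'$-process. By Lemma~\ref{lem: atmost2} the candidate cells are the \emph{first and second} white cells for $p$ but the \emph{first and last} for $p'$; these are not reflections of one another, and a Ferrers board is not preserved by horizontal reflection in any case, so none of Lemmas~\ref{lem: atmost2}, \ref{lem: rect}, \ref{lem: atleast1} ``reflect'' into their $p'$-versions. The two forced runs look completely different: for $p$ the forced $1$'s march through consecutive columns of $F'$ down consecutive rows, while for $p'$ the $1$'s (after the initial leftmost placement) go to the \emph{rightmost} white cell of each successive row, and the argument that the run terminates in a non-forced stage is genuinely different in the two cases (for $p'$ it requires observing that the rectangle of Lemma~\ref{lem: rect} lies entirely below the rows processed so far). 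The paper proves the two halves separately; in your proposal half the lemma, and with it the ``this happens at the same stage for $p$ and $p'$'' claim, is unsupported.

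For the $p$ half, the invariant you propose to carry through the induction is the wrong one. You take the Remark after Lemma~\ref{lem: rect} as an exact characterization of forcing and try to keep the witnessing $1$ inside the rectangle of height $\ell(c_l)$; but forcing is governed by the height of the \emph{second} white column $c_2$, not of the last one $c_l$. Concretely, take $\lambda=(n,n,\dots,n,3)$ and suppose $w_1=1$, so the first $1$ sits at $(n,1)$. At stage $2$ the top white row is row $n-1$ with white cells $2,3,\dots,n$, so $c_1=2$, $c_2=3$, $c_l=n$ and $\ell(c_l)=n-1$; the only $1$ to the left of $c_1$ lies in row $n$, outside your rectangle, so your geometric check (``the rightmost white column still extends upward to the row holding the witness'') fails at the very first inductive step. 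Nevertheless stage $2$ \emph{is} forced: placing the $1$ at $(n-1,3)$, together with the future $1$ in column $2$ and the $1$ at $(n,1)$, yields an occurrence of $p$ whose only nontrivial board condition is $(n,3)\in F$, which holds because row $n$ has three cells. The correct check is that the witness's row still contains the current $c_2$ --- equivalently, that the previous top white row had at least three white cells --- which is exactly what the paper's direct argument uses and is what makes the run end precisely when the top white row narrows to two cells. As written, your induction would stall on boards where $F$ widens below the first processed row, even though the conclusion there is true.
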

\begin{proof}
    We prove the lemma first for $p$. Let $F'$ be the Ferrers board of white cells at stage $i$. If the top row of $F'$ has $3$ or more cells, then so do subsequent rows. Now, if we place $1$ in the first column in the first row, then we would have to place $1$ in the second column in the second row. If not, when we place a $1$ in the second column of $F'$ in the future, we would have an occurrence of $p$. Arguing similarly, every placement is forced until we encounter a stage where the size of the top row of white cells is $2$. Let this be stage $j$.

   At stage $j-1$, let $\hat{F'}$ be the board of white cells. By definition, the top two rows have $3$ cells. At stage $j-1$, we would have placed in the first column of $\hat{F'}$ as this was forced. At stage $j$, we would have to place in the second column of $\hat{F'}$. This is also forced. If the third row of $\hat{F'}$ has $3$ cells, then there would only be one cell for stage $j+1$. Else, if there are more than $3$ cells in the third row of $\hat{F'}$, we can place $1$ in the fourth column on the third row and it would not participate any occurrence of $p$ with pre-existing $1'$s as the previous rows do not have cells in this column. This proves the lemma for $p$.

    Now, for $p'$, let $F''$ be Ferrers board of white cells. If we place $1$ in the first column, then in the second row, we cannot place $1$ in the second column, by a similar reasoning as before. Therefore, we are forced to place in the last cell on the second row.
    Arguing similarly, every placement is forced until we reach a row where only the second and third columns of $F''$ have white cells and the rest of the row is gray. Let stage $j$ be the stage when this happens.

    At stage $j-1$, let $\hat{F''}$ be the board of white cells. We have three cells in the top two rows, which correspond to the second, third and fourth columns of $F''$. In stage $j-1$, we have to place $1$ in the rightmost cell (fourth column). In stage $j$, once again, we have to place $1$ in the rightmost cell of that row (third column). If the third row of $\hat{F''}$ had $3$ cells to begin with, then, in stage $j+1$, there would be only one cell to place in. Else, we can place $1$ in the cell that is in the second column of $F''$ or in the rightmost column on the third row of $\hat{F''}$.
    By way of contradiction, suppose this stage were forced. Then, there would need to be a $1$ to the left of the first column in the rectangle described in Lemma \ref{lem: rect}.
    However, the rectangle described in Lemma \ref{lem: rect} would not have any $1$'s to the left of the first column of $\hat{F''}$ as the rectangle lies entirely below the top $j$ rows. This is because the last column of the third row of $\hat{F''}$ has no cells in the higher rows of $F''$. If there were cells in higher rows, there would also be a cell in the second row of $\hat{F''}$ in this column but the second row had strictly smaller size than the third row. This proves that this stage is not forced.
\end{proof}

\begin{lemma}\label{lem: unforced-descent}
    If $w_i=2$, then the stage $i+1$ is not forced for $p$ or $p'$ unless the top row has only one white cell.
\end{lemma}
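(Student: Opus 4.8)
The plan is to argue directly about the shape of the board of white cells after stage $i$, mirroring the structure of the proof of Lemma~\ref{lem: forced-descent}. First I would recall that, since the insertion process produces the same sequence of white-cell boards for $p$ and for $p'$, it suffices to prove the non-forcedness claim for one of them and observe it transfers; I would carry out the argument for $p'$ (the $p$ case being symmetric, with ``leftmost'' and ``rightmost'' interchanged). Fix a stage $i$ with $w_i=2$ and let $F''$ be the Ferrers board of white cells just before stage $i$, with top row coming from columns $c_1<\dots<c_l$ of $F$, where $l\ge 2$ (if $l=1$ there is nothing to prove, and $w_i=2$ already forces $l\ge 2$). By definition of the encoding, $w_i=2$ means $1$ is placed in the \emph{second} suitable position from the left; for $p'$ the two suitable positions are the leftmost and rightmost cells of the top row, so this $1$ goes into column $c_l$.

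The key step is then to show that stage $i+1$ offers two suitable positions whenever the new top row has more than one white cell. After placing the $1$ in column $c_l$ and removing that row and column, the new board $\tilde F$ of white cells has a top row coming from columns $c_1<\dots<c_{l-1}$ (the cell in column $c_l$ of the old second row is now removed). I would invoke the Remark following Lemma~\ref{lem: rect}: stage $i+1$ is forced only if either the top row of $\tilde F$ has at most $2$ white cells \emph{with} a pre-existing $1$ in the rectangle of Lemma~\ref{lem: rect}, or there is exactly one white cell. So, assuming $\tilde F$ has $\ge 2$ cells in its top row, it suffices to show the rectangle with corners $(1,1)$, $(c_{l-1},1)$, $(c_{l-1},\ell(c_{l-1}))$, $(1,\ell(c_{l-1}))$ contains no $1$ strictly to the left of column $c_1$. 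The point is that every $1$ placed before stage $i$ sits in a row that has already been coloured gray, and those rows lie \emph{above} the current white-cell board; since the columns $c_1,\dots,c_{l-1}$ are exactly the columns still alive in the top row, any $1$ in a column $<c_1$ that lay within the relevant rectangle would have had to be placed in a row below the top $i-1$ gray rows — contradicting that it was placed earlier. Making this precise is the same bookkeeping used at the end of the proof of Lemma~\ref{lem: forced-descent}: a column strictly to the left of $c_1$ that still has white cells below would have forced $c_1$ not to be the leftmost surviving column, and a $1$ already placed there would lie in the gray region above, hence outside the rectangle.

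The main obstacle I anticipate is precisely this last geometric claim — ruling out a stray $1$ to the left of $c_1$ inside the Lemma~\ref{lem: rect} rectangle. One must be careful that ``to the left of $c_1$'' columns can still contain cells of $F$ that were filled by $1$'s at earlier stages; the resolution is that such cells lie in rows that were coloured gray before stage $i$, and those gray rows are strictly above the white board $\tilde F$, so they cannot be inside the rectangle, whose top edge is at height $\ell(c_{l-1})\le \ell(c_1)$, i.e.\ within the white region. I would phrase this by noting that the height of the rectangle is bounded by the length of column $c_{l-1}$ in $\tilde F$, and every cell of $\tilde F$ lies strictly below all rows processed in stages $1,\dots,i$. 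Once that is nailed down, the conclusion is immediate: there is no forcing $1$ in the rectangle, so by the Remark after Lemma~\ref{lem: rect} stage $i+1$ has two suitable positions unless its top row is a single white cell, which is exactly the statement. Finally I would remark that the whole argument was phrased in terms of the white-cell board alone, which is identical for $p$ and $p'$, so the conclusion holds verbatim for $p$ (with left/right swapped), completing the proof.
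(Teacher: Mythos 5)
Your strategy differs from the paper's, and the difference is where the argument breaks. The paper observes that a forcing $1$ for stage $i+1$ (a $1$ to the left of the first white column inside the rectangle of Lemma~\ref{lem: rect}) would already have been a forcing $1$ for stage $i$, contradicting the fact that $w_i=2$ presupposes two suitable positions at stage $i$. You instead try to show outright that the stage-$(i+1)$ rectangle contains \emph{no} previously placed $1$ to the left of $c_1$, resting on the claim that every earlier $1$ sits in a gray row lying strictly above the rectangle, ``whose top edge is at height $\ell(c_{l-1})$, i.e.\ within the white region.'' That claim is false: in Lemma~\ref{lem: rect}, $\ell(c)$ is the full length of column $c$ in the original board $F$, so the rectangle reaches up to the top of column $c_{l-1}$, which in general extends well into the gray rows (for a square board it reaches all the way to row $n$). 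Since every placed $1$ is instantly in a gray row, if your claim were true no $1$ could ever lie in the rectangle of Lemma~\ref{lem: rect}, that lemma would be vacuous, and no stage would ever be forced except when the top row is a single cell --- contradicting both Lemma~\ref{lem: forced-descent} and the worked example, where $w_3=0$ precisely because the $1$ placed at stage $2$ (in a by-then gray row) lies inside the stage-$3$ rectangle to the left of $c_1$.

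A second symptom of the same problem: your argument uses $w_i=2$ only to locate the stage-$i$ placement, so if it were valid it would show that stage $i+1$ is unforced whenever its top row has at least two white cells, \emph{regardless} of the value of $w_i$ --- directly contradicting Lemma~\ref{lem: forced-descent}, where $w_i=1$ makes the subsequent stages forced. The hypothesis $w_i=2$ has to enter essentially: it tells you that stage $i$ itself was not forced, hence there is no $1$ to the left of the stage-$i$ first white column inside the stage-$i$ rectangle; one then checks that the region relevant to forcing at stage $i+1$ sits inside that stage-$i$ region (the first white column does not move, since the stage-$i$ placement for either $p$ or $p'$ is strictly to its right, and the stage-$(i+1)$ rectangle is no taller than the stage-$i$ one), which yields the contradiction. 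That is the paper's route, and your write-up is missing exactly this step.
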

\begin{proof}
    We prove the lemma first for $p$. Let $F'$ be the board of white cells at stage $i$. The top row of $F'$ has at least $2$ cells. If the first and second row both have only $2$ cells, then the top row at stage $i+1$ will have only one cell. For the general case, suppose there were $1$ to the left of the first column in the rectangle described in Lemma \ref{lem: rect} for stage $i+1$, then this $1$ would also be in the rectangle for insertion at stage $i$, thereby forcing $w_i=0$, which is a contradiction.

    Now, for $p'$, let $F''$ be the board of white cells at stage $i$. The analysis for the special case is similar. For the general case, suppose there were a $1$ to the left of the first column in the rectangle described in Lemma \ref{lem: rect} for stage $i+1$, then this would also be to the left of the first column in the rectangle for the insertion at stage $i$. This would mean that insertion at stage $i$ was forced, which is a contradiction.
\end{proof}

\begin{remark}
    When we reach a stage $i$ where there is only one white cell in the top row, it means that, at stage $i+1$, the first $i$ columns and top $i$ rows of the board are gray. The remaining white board is completely to the right and below the already placed $1$'s and no $1$ placed in the current white board, in the future, can form an occurrence of $p$ or $p'$ with a $1$ in the first $i$ columns. This is because the $(i+1)$-th column has no cells in the $i$-th row from the top.
\end{remark}

With the help of these lemmas, we are ready to prove that this is a bijection.

\begin{proof}[Proof of Theorem \ref{thm: len3-B12-B21}]
    We claim that the encoding scheme gives a bijection. We proceed by induction on the size of the Ferrers board. It is easily verified for Ferrers boards of size $3$, for example. For smaller boards, all tranversal avoid $p$ and $p'$. Now, suppose it is true for all Ferrers board of size $<k$. We prove for boards of size $k$. Let $F$ be a Ferrers board of size $k$. If the top most row of $F$ has one cell, then we argue as follows. Let $F'$ be the board obtained by deleting the first column of $F$. Any tranversal of $F'$ avoiding $p$ or $p'$ can be extended to a transversal of $F$ avoiding $p$ or $p'$ and each tranversal can be restricted to a transversal of $F'$. Therefore, the result is true by induction.

    Suppose the first row has more than $1$ cell, then Lemmas \ref{lem: forced-descent}, \ref{lem: unforced-descent} ensure that we reach a position where we have filled the first $i$ columns and top $i$ rows for some $i$ (see the Remark preceding the proof). Here, similar to our previous argument, we have that the white cells form a smaller Ferrers board $F'$ and we can use the bijection on $F'$ to construct the bijection on $F$ through induction.
\end{proof}

\begin{example}
We work out an example for the board $(5,5,4,4,3)$. We can see that the transversal $4,5,3,1,2$ avoids $p$. We obtain the encoding word from the board and transversal. \vspace{0.2cm}\\
\scalebox{0.75}{
\ytableausetup{centertableaux} 
\begin{ytableau}
   \textcolor{white}{d}  & $1$ & \\
     $1$ & \textcolor{white}{d} & \textcolor{white}{d} & \\
    \textcolor{white}{d} & & $1$ & \textcolor{white}{d}  \\
     & \textcolor{white}{d} & \textcolor{white}{d} & & $1$\\
     & \textcolor{white}{d} & \textcolor{white}{d} & $1$ & \\
\end{ytableau}
$\longrightarrow$
\ytableausetup{centertableaux} 
\begin{ytableau}
   *(lgrey)  & *(lgrey)$1$ & *(lgrey)\\
     $1$ & *(lgrey) & \textcolor{white}{d} & \\
    \textcolor{white}{d} & *(lgrey) & $1$ & \textcolor{white}{d} \\
     & *(lgrey) & \textcolor{white}{d} & & $1$\\
     & *(lgrey) & \textcolor{white}{d} & $1$ & \\
\end{ytableau}
$\longrightarrow$
\ytableausetup{centertableaux} 
\begin{ytableau}
   *(lgrey)  & *(lgrey)$1$ & *(lgrey)\\
     *(lgrey)$1$ & *(lgrey) & *(lgrey) & *(lgrey)\\
    *(lgrey) & *(lgrey) & $1$ & \textcolor{white}{d} \\
     *(lgrey)& *(lgrey) &\textcolor{white}{d} & & $1$\\
     *(lgrey)& *(lgrey) & \textcolor{white}{d} & $1$ & \\
\end{ytableau}
$\longrightarrow$
\ytableausetup{centertableaux} 
\begin{ytableau}
   *(lgrey)  & *(lgrey)$1$ & *(lgrey)\\
     *(lgrey)$1$ & *(lgrey) & *(lgrey) & *(lgrey)\\
    *(lgrey) & *(lgrey) & *(lgrey)$1$ & *(lgrey) \\
     *(lgrey)& *(lgrey) & *(lgrey) & & $1$\\
     *(lgrey)& *(lgrey) & *(lgrey) & $1$ & \\
\end{ytableau}
$\longrightarrow$
\ytableausetup{centertableaux} 
\begin{ytableau}
   *(lgrey)  & *(lgrey)$1$ & *(lgrey)\\
     *(lgrey)$1$ & *(lgrey) & *(lgrey) & *(lgrey)\\
    *(lgrey) & *(lgrey) & *(lgrey)$1$ & *(lgrey)  \\
     *(lgrey)& *(lgrey) & *(lgrey) & *(lgrey)& *(lgrey)$1$\\
     *(lgrey)& *(lgrey) & *(lgrey) & $1$ & *(lgrey)\\
\end{ytableau}
}\vspace{0.2cm}

The encoding word is $2,1,0,2,0$. The boards below display the insertion process for the transversal avoiding $p'$.\vspace{0.2cm}\\
\scalebox{0.75}{
\ytableausetup{centertableaux} 
\begin{ytableau}
   *(white)  &  & $1$\\
     *(white) & \textcolor{white}{d} & *(white) & \\
    \textcolor{white}{d} & & *(white) & \textcolor{white}{d} \\
     & *(white) & \textcolor{white}{d} & & *(white)\\
     & \textcolor{white}{d} & \textcolor{white}{d} & *(white) & \\
\end{ytableau}
$\longrightarrow$
\ytableausetup{centertableaux} 
\begin{ytableau}
   *(lgrey) & *(lgrey) & *(lgrey)$1$\\
     $1$ & \textcolor{white}{d} & *(lgrey) & \\
    \textcolor{white}{d} & & *(lgrey) & \textcolor{white}{d}  \\
     & *(white) & *(lgrey) & & *(white)\\
     & \textcolor{white}{d} & *(lgrey) & *(white) & \\
\end{ytableau}
$\longrightarrow$
\ytableausetup{centertableaux} 
\begin{ytableau}
   *(lgrey) & *(lgrey) & *(lgrey)$1$\\
     *(lgrey)$1$ & *(lgrey) & *(lgrey) & *(lgrey)\\
    *(lgrey) & & *(lgrey) & $1$ \\
     *(lgrey)& *(white) & *(lgrey) & & *(white)\\
     *(lgrey)& \textcolor{white}{d} & *(lgrey) & *(white) & \\
\end{ytableau}
$\longrightarrow$
\ytableausetup{centertableaux} 
\begin{ytableau}
   *(lgrey) & *(lgrey) & *(lgrey)$1$\\
     *(lgrey)$1$ & *(lgrey) & *(lgrey) & *(lgrey)\\
    *(lgrey) & *(lgrey) & *(lgrey) & *(lgrey)$1$ \\
     *(lgrey)& *(white) & *(lgrey) & *(lgrey)& $1$\\
     *(lgrey)& \textcolor{white}{d} & *(lgrey) & *(lgrey) & \\
\end{ytableau}
$\longrightarrow$

\ytableausetup{centertableaux} 
\begin{ytableau}
   *(lgrey) & *(lgrey) & *(lgrey)$1$\\
     *(lgrey)$1$ & *(lgrey) & *(lgrey) & *(lgrey)\\
    *(lgrey) & *(lgrey) & *(lgrey) & *(lgrey)$1$ \\
     *(lgrey)& *(lgrey) & *(lgrey) & *(lgrey)& *(lgrey)$1$\\
     *(lgrey)& $1$ & *(lgrey) & *(lgrey) & *(lgrey)\\
\end{ytableau}
}\vspace{0.2cm}\\

The transversal obtained is $4,1,5,3,2$ which avoids $p'$.
\end{example}

The connected POPs of size $3$ are the classical patterns of size $3$ and have received significant attention in previous studies. The number of permutations avoiding the pattern $123$, $|\Av_n(123)|$, was done by MacMahon \cite{macmahon-book} and the number of permutations avoiding the pattern $132$, $|\Av_n(132)|$, was done by Knuth \cite{knuth}.  Both cardinalities turn out to be the $n$-th Catalan number $C_n$. All other classical patterns of size $3$ can be obtained from these two by applying reverse and complement. Therefore, for any classical pattern $p$ of size $3$ we have $|\Av_n(p)|=C_n=\frac{1}{n+1}\binom{2n}{n}$. The shape-Wilf-equivalence of the classical patterns of size $3$ 
\begin{tikzpicture}[baseline=(current bounding box)]
    \filldraw  (0,0) circle (2pt);
    \filldraw  (0,-.5) circle (2pt);
    \filldraw  (0,0.5) circle (2pt);
    \draw (0,0) -- (0,-.5); 
    \draw (0,0) -- (0,.5);
    \node[] at (0.3,0) {$1$};
    \node[] at (0.3,.5)   { $2$};
    \node[] at (0.3,-.5)   { $3$};
\end{tikzpicture} $\sim_s$ \begin{tikzpicture}[baseline=(current bounding box)]
    \filldraw  (0,0) circle (2pt);
    \filldraw  (0,-.5) circle (2pt);
    \filldraw  (0,0.5) circle (2pt);
    \draw (0,0) -- (0,-.5); 
    \draw (0,0) -- (0,.5);
    \node[] at (0.3,0) {$3$};
    \node[] at (0.3,.5)   { $1$};
    \node[] at (0.3,-.5)   { $2$};
\end{tikzpicture} was proved by Stankova, West \cite{stankova_west} and 
\begin{tikzpicture}[baseline=(current bounding box)]
    \filldraw  (0,0) circle (2pt);
    \filldraw  (0,-.5) circle (2pt);
    \filldraw  (0,0.5) circle (2pt);
    \draw (0,0) -- (0,-.5); 
    \draw (0,0) -- (0,.5);
    \node[] at (0.3,0) {$2$};
    \node[] at (0.3,.5)   { $1$};
    \node[] at (0.3,-.5)   { $3$};
\end{tikzpicture}$\sim_s$ \begin{tikzpicture}[baseline=(current bounding box)]
    \filldraw  (0,0) circle (2pt);
    \filldraw  (0,-.5) circle (2pt);
    \filldraw  (0,0.5) circle (2pt);
    \draw (0,0) -- (0,-.5); 
    \draw (0,0) -- (0,.5);
    \node[] at (0.3,0) {$1$};
    \node[] at (0.3,.5)   { $3$};
    \node[] at (0.3,-.5)   { $2$};
\end{tikzpicture} $\sim_s$ \begin{tikzpicture}[baseline=(current bounding box)]
    \filldraw  (0,0) circle (2pt);
    \filldraw  (0,-.5) circle (2pt);
    \filldraw  (0,0.5) circle (2pt);
    \draw (0,0) -- (0,-.5); 
    \draw (0,0) -- (0,.5);
    \node[] at (0.3,0) {$2$};
    \node[] at (0.3,.5)   { $3$};
    \node[] at (0.3,-.5)   { $1$};
\end{tikzpicture} was proved by Babson, West \cite{babson} and Backelin, West, Xin \cite{BWX_conf}.

Now we consider the POPs of size $3$ with $2$ linear components. Theorem \ref{thm: len3-B12-B21} shows that \begin{tikzpicture}[baseline=(current bounding box)]
    \filldraw  (1,0.25) circle (2pt);
    \filldraw  (1,-.25) circle (2pt);
    \filldraw  (0.5,0) circle (2pt);
    \draw (1,.25) -- (1,-.25); 
    \node[] at (0.3,0) {$1$};
    \node[] at (1,.55)   { $2$};
    \node[] at (1,-.55)   { $3$};
\end{tikzpicture} $\sim_s$ \begin{tikzpicture}[baseline=(current bounding box)]
    \filldraw  (1,0.25) circle (2pt);
    \filldraw  (1,-.25) circle (2pt);
    \filldraw  (0.5,0) circle (2pt);
    \draw (1,.25) -- (1,-.25); 
    \node[] at (0.3,0) {$1$};
    \node[] at (1,.55)   { $3$};
    \node[] at (1,-.55)   { $2$};
\end{tikzpicture}. By Theorem \ref{iso_ver_beginning_end}, we have \begin{tikzpicture}[baseline=(current bounding box)]
    \filldraw  (1,0.25) circle (2pt);
    \filldraw  (1,-.25) circle (2pt);
    \filldraw  (0.5,0) circle (2pt);
    \draw (1,.25) -- (1,-.25); 
    \node[] at (0.3,0) {$1$};
    \node[] at (1,.55)   { $b$};
    \node[] at (1,-.55)   { $c$};
\end{tikzpicture} $\sim$ \begin{tikzpicture}[baseline=(current bounding box)]
    \filldraw  (0,0.25) circle (2pt);
    \filldraw  (0,-.25) circle (2pt);
    \filldraw  (0.5,0) circle (2pt);
    \draw (0,.25) -- (0,-.25); 
    \node[] at (0.7,0) {$3$};
    \node[] at (0,.55)   { $b-1$};
    \node[] at (0,-.55)   { $c-1$};
\end{tikzpicture} for $\{b,c\}=\{2,3\}$, but they are not shape-Wilf-equivalent. 
The POP \begin{tikzpicture}[baseline=(current bounding box)]
    \filldraw  (0,0.25) circle (2pt);
    \filldraw  (0,-.25) circle (2pt);
    \filldraw  (0.5,0) circle (2pt);
    \draw (0,.25) -- (0,-.25); 
    \node[] at (0.7,0) {$2$};
    \node[] at (0,.55)   { $1$};
    \node[] at (0,-.55)   { $3$};
\end{tikzpicture}is the reverse of \begin{tikzpicture}[baseline=(current bounding box)]
    \filldraw  (0,0.25) circle (2pt);
    \filldraw  (0,-.25) circle (2pt);
    \filldraw  (0.5,0) circle (2pt);
    \draw (0,.25) -- (0,-.25); 
    \node[] at (0.7,0) {$2$};
    \node[] at (0,.55)   { $3$};
    \node[] at (0,-.55)   { $1$};
\end{tikzpicture}. So, they are Wilf-equivalent but not shape-Wilf-equivalent. Detailed classification of these POPs is provided below in Table\ref{tab:table-len-3}.

\begin{table}[!htbp]

    \centering
\begin{tabular}{|l|l|l|l|l|l|l|l|l|l|}
\noalign{\hrule height 1pt}
\textbf{No. }&\textbf{POP} $(p)$ & $|\Av_n(p)|~(n=1,2,\ldots,8,\ldots)$ & \textbf{OEIS} \\
\noalign{\hrule height 1pt}
1 & \begin{tikzpicture}[baseline=(current bounding box)]
    \filldraw  (0,0) circle (2pt);
    \filldraw  (0,-.5) circle (2pt);
    \filldraw  (0,0.5) circle (2pt);
    \draw (0,0) -- (0,-.5); 
    \draw (0,0) -- (0,.5);
    \node[] at (0.3,0) {$2$};
    \node[] at (0.3,.5)   { $1$};
    \node[] at (0.3,-.5)   { $3$};
\end{tikzpicture} & $1,2,5,14,42,132,429, 1430, \ldots$ & \href{https://oeis.org/search?q=A000108&language=english&go=Search}{A000108} \\

 \noalign{\hrule height 1.5pt}

2 &  \begin{tikzpicture}[baseline=(current bounding box)]
    \filldraw  (0,0.25) circle (2pt);
    \filldraw  (0,-.25) circle (2pt);
    \filldraw  (0.5,0) circle (2pt);
    \draw (0,.25) -- (0,-.25); 
    \node[] at (0.7,0) {$3$};
    \node[] at (-.25,.3)   { $1$};
    \node[] at (-.25,-.3)   { $2$};
\end{tikzpicture} & $1,2,3,4,5,6,7,8, \ldots$ &\href{https://oeis.org/search?q=A000027&language=english&go=Search}{A000027} \\

 \noalign{\hrule height 1.5pt}

3 & \begin{tikzpicture}[baseline=(current bounding box)]
    \filldraw  (0,0.25) circle (2pt);
    \filldraw  (0,-.25) circle (2pt);
    \filldraw  (0.5,0) circle (2pt);
    \draw (0,.25) -- (0,-.25); 
    \node[] at (0.7,0) {$2$};
    \node[] at (-.25,.3)   { $1$};
    \node[] at (-.25,-.3)   { $3$};
\end{tikzpicture} & $1,2,3,5,8,13,21,34,\ldots$ & \href{https://oeis.org/search?q=A000045&language=english&go=Search}{A000045}\\

\hline
\end{tabular}
\caption{Classification of POPs of size $3$}
\label{tab:table-len-3}
\end{table}


We will apply the Theorems \ref{thm: main1-shape}, \ref{thm: main2-wilf-3size}, \ref{thm: disjoint-sum-shape-w}, \ref{thm: disjoint-sum-wilf}, \ref{iso_ver_beginning_end} to classify patterns of size $4$ and $5$ consisting of chain components. For a given pattern $p$, our computations for $\Av_n(p)$ reveal multiple non-trivial equivalence classes, each represented by a row. Adjacent rows without separating lines indicate Wilf-equivalent classes, as discussed below. Lines separate rows which are not Wilf-equivalent.
Rows separated by a thick line represent patterns with distinct isolated vertices. Whenever the sequence is present on OEIS, we provide the OEIS entry. This convention will be followed throughout the rest of the paper.
\section{Classification of POPs of size $4$  whose components are chains }\label{sec: len4}

In this section, we will classify all POPs of size $4$  all of whose components are chains. There are four types of such POPs, which are

\textbf{(I)} \begin{tikzpicture}[baseline=(current bounding box)]
		\filldraw  (0,.9) circle (2pt);
        \filldraw  (0,.3) circle (2pt);
        \filldraw  (0,-.3) circle (2pt);
        \filldraw  (0,-.9) circle (2pt);
		
		\draw (0,.9) -- (0,.3); 
        \draw (0,0.3) -- (0,-.3); 
        \draw (0,-0.3) -- (0,-.9); 
		
	\end{tikzpicture} \hspace{1cm}
\textbf{(II)} \begin{tikzpicture}[baseline=(current bounding box)]
		\filldraw  (0,0) circle (2pt);
        \filldraw  (0,.8) circle (2pt);
        \filldraw  (0,-.8) circle (2pt);
        \filldraw  (.7,0) circle (2pt);
		
		\draw (0,0) -- (0,.8); 
        \draw (0,0) -- (0,-.8); 
		
	\end{tikzpicture} \hspace{1cm}
\textbf{(III)}\begin{tikzpicture}[baseline=(current bounding box)]
		\filldraw  (0,-.4) circle (2pt); 
		\filldraw   (0,.4) circle (2pt);
		\filldraw  (.7,0.4) circle (2pt);
        \filldraw  (.7,-.4) circle (2pt);
		
		\draw (0,-.4) -- (0,.4); 
        \draw (.7,0) -- (.7,.4); 
        \draw (.7,0) -- (.7,-.4); 
		
	\end{tikzpicture}\hspace{1cm}
\textbf{(IV)}\begin{tikzpicture}[baseline=(current bounding box)]
		\filldraw  (0,-.4) circle (2pt); 
		\filldraw   (0,.4) circle (2pt);
		\filldraw  (.7,0) circle (2pt);
        \filldraw  (1.4,0) circle (2pt);
		
		\draw (0,-.4) -- (0,.4); 
		
	\end{tikzpicture}

Type I are the classical patterns of length $4$ and are classified in \cite{Classical-4-len-pattern}, \cite{noy-elizalde-adv-appl},\cite{mireille},\cite{jaggard-prefix-involution}, \cite{forbidden_seq},\cite{1342} and type IV is classified in \cite{gao-kitaev-length-4-5-pops}. Therefore, we are only left with type II and III. 

\subsection{Type II}\label{len4_type2}
Let the POP \begin{tikzpicture}[baseline=(current bounding box)]\filldraw  (0,0) circle (2pt);
        \filldraw  (0,.8) circle (2pt);
        \filldraw  (0,-.8) circle (2pt);
        \filldraw  (.7,0) circle (2pt);
		
		\draw (0,0) -- (0,.8); 
        \draw (0,0) -- (0,-.8); 
        \node[] at (-.2,.8)  { $a$};
		\node[] at (-.2,0)  { $b$};
		\node[] at (-.2,-.8)  { $c$};
        \node[] at (.9,0)  { $d$};
\end{tikzpicture} be denoted by $(a,b,c;d)$.
 There are $4$ possible values for $d$, and each value corresponds to $3!=6$ different patterns. Hence, we have a total of $24$ many POPs of type II. The complement and reverse of $(a,b,c;d)$ are $(c,b,a;d)$ and $(5-a,5-b,5-c;5-d)$ respectively. Therefore, POPs with $d=1,2$ are reverse of the POPs with $d=4,3$ respectively. 
 As complementation and reversing will give Wilf-equivalent POPs, it is enough to consider the POPs with $d=3,4$. 
 
  As all the classical patterns of size $3$, that is, all the connected linear POPs of size $3$ are Wilf-equivalent, the POPs $(a,b,c;d)$ where $d=1,4$ are Wilf-equivalent, by Theorem \ref{iso_ver_beginning_end}.
 In \cite[Theorem 24]{gao-kitaev-length-4-5-pops}, the POP $p=(4,1,2;3)$ was studied, and we will prove that all POPs $(a,b,c;d)$ with  $d=3$ are Wilf-equivalent to $p$. Indeed, $(4,1,2;3) \sim (4,2,1;3)$
 by Theorem \ref{thm: main1-shape} and  $(4,2,1;3) \sim (2,4,1;3)$  by Theorem \ref{thm: main2-wilf-3size}. Hence, $p=(4,1,2;3) \sim (2,4,1;3)$.  


Therefore, for type II, there are $2$ Wilf-equivalence classes. A detailed classification of type I POPs is provided below Table \ref{table1}.

\begin{table}[!htbp]
    \centering
\begin{tabular}{|l|l|l|l|l|l|l|l|l|l|}
\noalign{\hrule height 1pt}
\textbf{No. }&\textbf{POP of Type II} $(p)$ & $|\Av_n(p)|~(n=1,2,\ldots,8,\ldots)$ &\textbf{OEIS} \\
\noalign{\hrule height 1pt}
 
 &	$(1,2,3;4)$ & & \\
1 & $(1,3,2;4)$ &  $1,2,6,20,70,252,924,3432, \ldots$ & \href{https://oeis.org/search?q=A000984&language=english&go=Search}{A000984}\\
& $(3,1,2;4)$ & &\\
 
 \noalign{\hrule height 1.5pt}
 
 & $(4,1,2;3)$ &  &\\
2 & $(4,2,1;3)$ & $1,2,6,20,71,264,1015,4002,\ldots$ & \href{https://oeis.org/search?q=A049124&language=english&go=Search}{A049124}\\
& $(2,4,1;3)$ & &\\

\hline

\end{tabular}
\caption{Classification of Type II POPs of size $4$ }
    \label{table1}
\end{table}

\subsection{Type III}\label{len4_type3}
Let the POP \begin{tikzpicture}[baseline=(current bounding box)]
		\filldraw  (0,-.4) circle (2pt); 
		\filldraw   (0,.4) circle (2pt);
		\filldraw  (.7,0.4) circle (2pt);
        \filldraw  (.7,-.4) circle (2pt);
        
		\draw (0,-.4) -- (0,.4); 
        \draw (.7,0) -- (.7,.4); 
        \draw (.7,0) -- (.7,-.4); 
		\node[] at (-.2,.4)  { $a$};
		\node[] at (-.2,-.4)  { $b$};
		\node[] at (.9,.4)  { $c$};
        \node[] at (.9,-.4)  { $d$};
\end{tikzpicture} be denoted by $(a,b;c,d)$. The total number of Type II POPs is $4!=24$. Complement and reverse of $(a,b;c,d)$ are $(b,a;d,c)$ and $(5-a,5-b;5-c,5-d)$ respectively.
As $(a,b;c,d)=(c,d;a,b)$, it is enough to consider the POPs with $(a,b)\in \{(1,2),(1,3),(1,4)\}$.

 The POPs $(1,2;4,3), (1,3;4,2)$ and $(1,4;3,2)$ were studied by Gao and Kitaev in  \cite[Theorem 15,16,18]{gao-kitaev-length-4-5-pops} respectively. Each of these POPs is in a separate Wilf-equivalence class. 
As \begin{tikzpicture}[baseline=(current bounding box)]
    \filldraw  (0,0.25) circle (2pt);
    \filldraw  (0,-.25) circle (2pt);
    
    \draw (0,.25) -- (0,-.25); 
    
    \node[] at (-.25,.3)   { $1$};
    \node[] at (-.25,-.3)   { $2$};
\end{tikzpicture} $\sim$  \begin{tikzpicture}[baseline=(current bounding box)]
    \filldraw  (0,0.25) circle (2pt);
    \filldraw  (0,-.25) circle (2pt);
    
    \draw (0,.25) -- (0,-.25); 
    
    \node[] at (-.25,.3)   { $2$};
    \node[] at (-.25,-.3)   { $1$};
\end{tikzpicture}, we have $(1,2;3,4) \sim (1,2;4,3)$ (by Theorem \ref{thm: disjoint-sum-shape-w} ).
The POPs $(1,3;2,4)$  and $(1,4;2,3)$  are in $2$ 
different Wilf-equivalence classes.

Thus, we get $5$ Wilf-equivalent classes. It is easy to check that 
the remaining POPs are compositions of complement and reverse of POPs mentioned so far. Therefore, there are total $5$ Wilf-equivalence 
classes of type III. A detailed classification of type III POPs is 
provided in Table \ref{table2}.

\begin{table}[!htbp]

    \centering
\begin{tabular}{|l|l|l|l|l|l|l|l|l|l|}
\noalign{\hrule height 1pt}
\textbf{No. }&\textbf{POP of Type III} $(p)$ & $|\Av_n(p)|~(n=1,2,\ldots,8,\ldots)$& \textbf{OEIS}  \\
\noalign{\hrule height 1pt}
 
1 &	$(1,2;4,3)$ &  $1,2,6,18, 50, 130, 322, 770, \ldots$ & \href{https://oeis.org/search?q=1%2C+2%2C+6%2C+18%2C+50%2C+130%2C+322%2C+770&language=english&go=Search}{A048495}\\
&$(1,2;3,4)$ & &\\
 \hline
 
2 & $(1,3;4,2)$ & $1,2,6,18,52,152,444, 1296,\ldots$ &\href{https://oeis.org/search?q=1%2C+2%2C+6%2C+18%2C+52%2C+152%2C+444%2C+1296&language=english&go=Search}{A077835}  \\
\hline

3 & $(1,3;2,4)$& $1,2,6,18, 52,147, 413, 1159   \ldots$ &\\
 \hline

4 & $(1,4;3,2)$& $1, 2, 6, 18,50,134,358,962,  \ldots$ &\href{https://oeis.org/search?q=1%2C+2%2C+6%2C+18%2C+50%2C+134%2C+358%2C+962&language=english&go=Search}{A271897} \\

\hline
5 & $(1,4;2,3)$ & $1,2,6, 18, 53, 156, 460, 1357 \ldots$ &\\

\hline

\end{tabular}

\caption{Classification of Type III POPs of size $4$ }
\label{table2}
\end{table}

\section{Classification of POPs of size $5$  whose components are chains }\label{sec: len5}
In this section, we classify all POPs of size $5$ all of whose components are chains. Linear ordered POPs of size $5$ are the classical patterns of size $5$ that have already been classified by  Clisby, Conway, Guttmann and Inoue
\cite{Classical-5-len-pattern}. Therefore, only the following four types of POPs remain to be classified.

\textbf{(I)} \begin{tikzpicture}[baseline=(current bounding box)]
		\filldraw  (0,.9) circle (2pt);
        \filldraw  (0,.3) circle (2pt);
        \filldraw  (0,-.3) circle (2pt);
        \filldraw  (0,-.9) circle (2pt);
        \filldraw  (.6,0) circle (2pt);
		
		\draw (0,.9) -- (0,.3); 
        \draw (0,0.3) -- (0,-.3); 
        \draw (0,-0.3) -- (0,-.9); 
		
	\end{tikzpicture} \hspace{1cm}
\textbf{(II)} \begin{tikzpicture}[baseline=(current bounding box)]
		\filldraw  (0,0) circle (2pt);
        \filldraw  (0,.8) circle (2pt);
        \filldraw  (0,-.8) circle (2pt);
        \filldraw  (.7,0) circle (2pt);
        \filldraw  (1.4,0) circle (2pt);
		
		\draw (0,0) -- (0,.8); 
        \draw (0,0) -- (0,-.8); 
		
	\end{tikzpicture} \hspace{1cm}
\textbf{(III)} \begin{tikzpicture}[baseline=(current bounding box)]
		\filldraw  (0.8,-.4) circle (2pt); 
		\filldraw   (0.8,.4) circle (2pt);
		\filldraw  (0,0) circle (2pt);
        \filldraw  (0,.8) circle (2pt);
        \filldraw  (0,-.8) circle (2pt);
		
		\draw (0.8,-.4) -- (0.8,.4); 
        \draw (0,0) -- (0,.8); 
        \draw (0,0) -- (0,-.8); 
		
	\end{tikzpicture} \hspace{1cm}
\textbf{(IV)}\begin{tikzpicture}[baseline=(current bounding box)]
		\filldraw  (0,-.4) circle (2pt); 
		\filldraw   (0,.4) circle (2pt);
		\filldraw  (.7,0.4) circle (2pt);
        \filldraw  (.7,-.4) circle (2pt);
        \filldraw  (1.4,0) circle (2pt);
		
		\draw (0,-.4) -- (0,.4); 
        \draw (.7,0) -- (.7,.4); 
        \draw (.7,0) -- (.7,-.4); 
		
	\end{tikzpicture}
\subsection{Type I}
Let the POP \begin{tikzpicture}[baseline=(current bounding box)]
		\filldraw  (0,.9) circle (2pt);
        \filldraw  (0,.3) circle (2pt);
        \filldraw  (0,-.3) circle (2pt);
        \filldraw  (0,-.9) circle (2pt);
        \filldraw  (.6,0) circle (2pt);
		
		\draw (0,.9) -- (0,.3); 
        \draw (0,0.3) -- (0,-.3); 
        \draw (0,-0.3) -- (0,-.9); 
        \node[] at (-.2,0.9)  {$a$};
		\node[] at (-.2,.3)  { $b$};
		\node[] at (-.2,-.3)  {$c$};
        \node[] at (-.2,-.9)  {$d$};
        \node[] at (.8,0)    { $e$};
		
	\end{tikzpicture} be denoted by $(a,b,c,d ;e)$. There are $5$ possible values for $e$, and each value corresponds to $4!=24$ different patterns giving us a total of $120$ POPs of type I. The complement and reverse of $(a,b,c,d;e)$ are $(d,c,b,a;e)$ and $(6-a,6-b,6-c,6-d;6-e)$ respectively.
Therefore, the POPs with $e=1,2$ are the reverse of the POPs with $e=5,4$ respectively. Hence, it is enough to consider the POPs with $e=3,4,5$.

  Classical patterns of size $4$ have been thoroughly studied, and their complete classification has been established. There are the following non-trivial Wilf-equivalences: $$\{1234, 2143, 1243, 1432, 1423\},\{1342, 2413\},\{1324\}.$$
    Other classical patterns of size $4$ are trivially related to the previously mentioned patterns. When $e=5$, by Theorem \ref{iso_ver_beginning_end}, we have $3$ distinct classes of POPs of type I.

   By Theorem \ref{thm: main2-wilf-3size}, we have that $(5,3,2,1;4) \sim (3,5,2,1;4)$ and 
     $(5,3,1,2;4) \sim (3,5,1,2;4)$.
   As \begin{tikzpicture}[baseline=(current bounding box)]
    \filldraw  (0,0) circle (2pt);
    \filldraw  (0,-.5) circle (2pt);
    \filldraw  (0,0.5) circle (2pt);
    \draw (0,0) -- (0,-.5); 
    \draw (0,0) -- (0,.5);
    \node[] at (0.3,0) {$2$};
    \node[] at (0.3,.5)   { $1$};
    \node[] at (0.3,-.5)   { $3$};
\end{tikzpicture}$\sim_s$ \begin{tikzpicture}[baseline=(current bounding box)]
    \filldraw  (0,0) circle (2pt);
    \filldraw  (0,-.5) circle (2pt);
    \filldraw  (0,0.5) circle (2pt);
    \draw (0,0) -- (0,-.5); 
    \draw (0,0) -- (0,.5);
    \node[] at (0.3,0) {$1$};
    \node[] at (0.3,.5)   { $3$};
    \node[] at (0.3,-.5)   { $2$};
\end{tikzpicture} $\sim_s$ \begin{tikzpicture}[baseline=(current bounding box)]
    \filldraw  (0,0) circle (2pt);
    \filldraw  (0,-.5) circle (2pt);
    \filldraw  (0,0.5) circle (2pt);
    \draw (0,0) -- (0,-.5); 
    \draw (0,0) -- (0,.5);
    \node[] at (0.3,0) {$2$};
    \node[] at (0.3,.5)   { $3$};
    \node[] at (0.3,-.5)   { $1$};
\end{tikzpicture},  we have $(5,3,2,1;4) \sim (5,3,1,2;4) \sim (5,1,2,3;4)$ (by Theorem \ref{thm: main1-shape}). Therefore, $(5,3,2,1;4)$, $(3,5,2,1;4)$, $(5,3,1,2;4)$, $(3,5,1,2;4)$ and $(5,1,2,3;4)$ are in the same Wilf-equivalence class. As \begin{tikzpicture}[baseline=(current bounding box)]
		\filldraw  (0,.9) circle (2pt);
        \filldraw  (0,.3) circle (2pt);
        \filldraw  (0,-.3) circle (2pt);
		
		\draw (0,.9) -- (0,.3); 
        \draw (0,0.3) -- (0,-.3); 
        \node[] at (-.2,0.9)  {$2$};
		\node[] at (-.2,.3)  { $1$};
		\node[] at (-.2,-.3)  {$3$};
		
	\end{tikzpicture} $\sim_s$ \begin{tikzpicture}[baseline=(current bounding box)]
		\filldraw  (0,.9) circle (2pt);
        \filldraw  (0,.3) circle (2pt);
        \filldraw  (0,-.3) circle (2pt);
		
		\draw (0,.9) -- (0,.3); 
        \draw (0,0.3) -- (0,-.3); 
        \node[] at (-.2,0.9)  {$1$};
		\node[] at (-.2,.3)  { $3$};
		\node[] at (-.2,-.3)  {$2$};
		
	\end{tikzpicture}, we have  $(5,2,1,3;4)\sim (5,1,3,2;4)$ (by Theorem \ref{thm: main1-shape}).   
As \begin{tikzpicture}[baseline=(current bounding box)]
    \filldraw  (0,0.25) circle (2pt);
    \filldraw  (0,-.25) circle (2pt);
    
    \draw (0,.25) -- (0,-.25); 
    
    \node[] at (-.25,.3)   { $2$};
    \node[] at (-.25,-.3)   { $1$};
\end{tikzpicture} $\sim_s$ \begin{tikzpicture}[baseline=(current bounding box)]
    \filldraw  (0,0.25) circle (2pt);
    \filldraw  (0,-.25) circle (2pt);
    
    \draw (0,.25) -- (0,-.25); 
    
    \node[] at (-.25,.3)   { $1$};
    \node[] at (-.25,-.3)   { $2$};
\end{tikzpicture}, we have $(5,4,2,1;3)\sim (5,4,1,2;3)$ and $ (4,5,2,1;3)\sim (4,5,1,2;3)\sim (5,4,1,2;3)$ (by Theorem \ref{thm: main1-shape}).   
The POPs $(5,2,3,1;4), (2,5,3,1;4),  (1,5,3,2;4),  (2,5,1,3;4)$, $ (1,5,2,3;4)$, $(5,2,4,1;3)$, $(4,2,5,1;3), (2,5,4,1;3), (2,4,5,1;3), (2,5,1,4;3)$ are in $10$ different Wilf-equivalence classes.

    So far, we have $16$ Wilf-equivalent classes. It is easy to check that the remaining POPs are trivially related to the ones mentioned so far. Therefore, there are a total of $16$ Wilf-equivalence classes of type I. Detailed classification of type I POPs is provided in Table \ref{table3}.

\begin{table}[!htbp]
    \centering
\begin{tabular}{|l|l|l|l|l|l|l|l|l|l|}
\noalign{\hrule height 1pt}
\textbf{No. }&\textbf{POP of Type I} $(p)$ & $|\Av_n(p)|~(n=1,2,\ldots,8,\ldots)$ &  \textbf{OEIS}\\
\noalign{\hrule height 1pt}
& $(1,2,3,4;5)$ & &\\
& $(2,1,4,3;5)$ & &\\
1 & $(2,1,3,4;5)$ &  $1,2,6, 24,115, 618, 3591, 22088,\ldots $ & \href{https://oeis.org/search?q=1%2C2%2C6%2C+24%2C115%2C+618%2C+3591%2C+22088&go=Search}{A128088}\\
& $(3,2,1,4;5)$ & &\\
& $(3,1,2,4;5)$ & &\\
\hline
2 & $(2,3,1,4;5)$ & $1,2,6,24,115, 618,3584, 21920, \ldots$ &\\
& $(3,1,4,2;5)$ & &\\
\hline
3 & $(1,3,2,4;5)$ &$ 1,2,6, 24, 115, 618, 3591, 22096, \ldots $ &\\

\noalign{\hrule height 1.5pt}
 &	$(5,3,2,1;4)$ &  &\\
&$(5,3,1,2;4)$ &  &\\
4 &$(5,1,2,3;4)$ &  $1,2,6,24,115, 619, 3612, 22386, \ldots$ &\\
&$(3,5,2,1;4)$ & &\\
&$(3,5,1,2;4)$ & &\\
\hline

 5& $(5,2,1,3;4)$& $1, 2, 6, 24, 115,618, 3592, 22102,\ldots$ &\\
 &$(5,1,3,2;4)$ & &\\

 \hline
6 & $(5,2,3,1;4)$& $1,2,6,24,115, 619, 3613, 22412, \ldots$&\\

\hline
7 & $(2,5,3,1;4)$& $1,2,6,24,115, 619, 3607, 22257, \ldots$&\\

\hline 
8 & $(2,5,1,3;4)$ &$1,2,6,24,115,618,3587, 22000, \ldots $ &\\
 \hline
9 & $(1,5,3,2;4)$& $1,2,6,24, 115, 619, 3608, 22293,\ldots$&\\

 \hline
10 & $(1,5,2,3;4)$& $1,2,6,24,115, 619, 3606, 22232 ,\ldots$&\\

\noalign{\hrule height 1.5pt}
& $(4,5,1,2;3)$ & &\\
11 &  $(5,4,2,1;3)$  &$1, 2, 6, 24, 115, 619, 3614, 22425, \dots$ &\\
 &$(5,4,1,2;3)$ & &\\
 
\hline
12 & $(5,2,4,1;3)$& $1,2,6,24, 115, 619, 3615, 22457,\ldots$&\\

\hline
13 & $(4,2,5,1;3)$& $1,2,6,24, 115, 619, 3608, 22272 ,\ldots$&\\
\hline
14 & $(2,5,4,1;3)$& $1,2,6,24, 115, 619, 3609, 22297,\ldots$&\\

\hline
15 & $(2,5,1,4;3)$& $1,2,6,24,  115, 619, 3601, 22147,\ldots$&\\
\hline

16 & $(2,4,5,1;3)$& $1,2,6,24, 115, 619, 3614, 22426,\ldots$&\\

\hline

\end{tabular}
\caption{Classification of Type I POPs of size $5$ }
\label{table3}
\end{table}
\begin{remark}
        This proves a conjecture of Dimitrov \cite[Conjecture 8.1]{stoyan-distant-pattern}, which was also presented at the problem session of the British Combinatorial Conference, 2024 \cite{cameron-bcc30}. 
The author conjectured the following.
\begin{enumerate}
    \item \label{conj1}$av_n(1\sq 234)=av_n(1\sq243)=av_n(2\sq143)$.
    \item \label{conj2} $av_n(12\sq34)=av_n(12\sq43)=av_n(21\sq43)$.
    \item \label{conj3} $av_n(123\sq4)=av_n(124\sq3)=av_n(214\sq3)$.
\end{enumerate}
 Using the notation of POPs, we have $1\sq 234=(5,4,3,1;2) $, $1\sq243=(4,5,3,1;2)$, $2\sq 143=(4,5,1,3;2)$. These are trivially related to $(5,3,2,1;4), (5,3,1,2;4), (3,5,1,2;4)$ respectively. These are in the same class and correspond to Row $4$ of Table \ref{table3}.
 
 Similarly, $12\sq34=(5,4,2,1;3)$, $12\sq43=(4,5,2,1;3)\sim (5,4,1,2;3)$, $21\sq43=(4,5,1,2;3)$. These are in the same class and correspond to Row $11$ of Table \ref{table3}.
 
 Finally, $123\sq4=(5,3,2,1;4)$, $124\sq3=(3,5,2,1;4)$, $214\sq3=(3,5,1,2;4)$. This corresponds to Row $4$ of Table \ref{table3}.  
\end{remark}
    
\subsection{Type II}
Let the POP \begin{tikzpicture}[baseline=(current bounding box)]\filldraw  (0,0) circle (2pt);
        \filldraw  (0,.8) circle (2pt);
        \filldraw  (0,-.8) circle (2pt);
        \filldraw  (.7,0) circle (2pt);
        \filldraw  (1.4,0) circle (2pt);
		
		\draw (0,0) -- (0,.8); 
        \draw (0,0) -- (0,-.8); 
        \node[] at (-.2,.8)  { $a$};
		\node[] at (-.2,0)  { $b$};
		\node[] at (-.2,-.8)  { $c$};
        \node[] at (.9,0)  { $d$};
        \node[] at (1.6,0)  { $e$};
\end{tikzpicture} be denoted by $(a,b,c; d;e)$.
There are $\binom{5}{2}=10$ possible values for $d$ and $e$, and each value corresponds to $3!=6$ different patterns. Hence, we have $60$ POPs of type II.  The complement and reverse of $(a,b,c;d,e)$ are $(c,b,a;d,e)$ and $(6-a,6-b,6-c;6-d,6-e)$ respectively. Therefore, POPs with $(d,e)=(4,1),(3,1),(2,1),(4,2),(3,2)$ are the reverse of POPs with $(d,e)=(2,5),(3,5),(4,5),(2,4),(3,4)$ respectively. As $(a,b,c;d;e)=(a,b,c;e;d)$, it is enough to consider the POPs with $(d,e)=(2,5),(3,5),(4,5),(2,4),(3,4)$.
      
      Theorem \ref{iso_ver_beginning_end} tells us that the POPs  $(a,b,c;d;e)$  where  $(d;e)=(4;5)$ and $(1;2)$ are Wilf-equivalent as all classical patterns of size $3$ are Wilf-equivalent. In Section \ref{len4_type2}, we proved that all POPs of the shape \begin{tikzpicture}[baseline=(current bounding box)]\filldraw  (0,0) circle (2pt);
        \filldraw  (0,.8) circle (2pt);
        \filldraw  (0,-.8) circle (2pt);
        \filldraw  (.7,0) circle (2pt);
		
		\draw (0,0) -- (0,.8); 
        \draw (0,0) -- (0,-.8); 
        \node[] at (-.2,.8)  { $a$};
		\node[] at (-.2,0)  { $b$};
		\node[] at (-.2,-.8)  { $c$};
        \node[] at (.9,0)  { $d$};
\end{tikzpicture} where $d=2$ or $3$ are Wilf-equivalent. By Theorem \ref{iso_ver_beginning_end}, we see that the POPs $(a,b,c;d;e)$ where $\{d,e\}=\{2,5\},\{3,5\}$ are Wilf-equivalent. 
 The POPs $(1,3,5;2;4), (1,5,3;2;4), (1,2,5;3;4)$ and $(1,5,2;3;4)$  are in $4$ different Wilf-equivalence classes. 

So far, we have $6$  Wilf-equivalent classes. It is easy to check that the remaining POPs are trivially related to POPs mentioned so far. Therefore, there are $6$ Wilf-equivalence classes of type II. A detailed classification of type II POPs is provided in Table \ref{table4}.

\begin{table}[!htbp]
    \centering
\begin{tabular}{|l|l|l|l|l|l|l|l|l|l|}
\noalign{\hrule height 1pt}
\textbf{No. }&\textbf{POP of Type II} $(p)$ & $|\Av_n(p)|~(n=1,2,\ldots,8,\ldots)$  \\
\noalign{\hrule height 1pt}

 &	$(1,2,3;4;5)$ &  \\
1 & $(1,3,2;4;5)$ &  $1,2,6,24,100, 420, 1764, 7392, \ldots$ \\
& $(3,1,2;4;5)$ &\\

\noalign{\hrule height 1.5pt}

 & $(1,3,4;2;5)$ &  \\
& $(1,4,3;2;5)$ &\\
& $(4,1,3;2;5)$ &\\
2 & $(1,2,4;3;5)$ & $1,2,6,24,100, 426, 1848, 8120, \ldots$  \\
& $(1,4,2;3,5)$ &\\
& $(4,1,2;3;5)$ &\\

\noalign{\hrule height 1.5pt}

3 & $(1,3,5;2;4)$& $1, 2, 6, 24, 100,434, 1934, 8828 \ldots$ \\
& $(5,3,1;2;4) $ &\\

 \hline
 & $(1,5,3;2;4)$& \\
4 & $(3,1,5;2;4)$ & $1,2,6,24,100, 430, 1889, 8494  \ldots$\\
& $(5,1,3;2;4)$ &\\

\noalign{\hrule height 1.5pt}

5 & $(1,2,5;3;4)$& $1,2,6,24,100, 426, 1875, 8482,  \ldots$\\
& $(5,1,2;3;4)$ &\\

 \hline
6 & $(1,5,2;3;4)$& $1,2,6,24, 100, 426, 1855, 8278 ,\ldots$\\

\hline

\end{tabular}
\caption{Classification of Type II POPs of size $5$}
\label{table4}
\end{table}

\subsection{Type III}
Let the POP  \begin{tikzpicture}[baseline=(current bounding box)]
		\filldraw  (0.8,-.4) circle (2pt); 
		\filldraw   (0.8,.4) circle (2pt);
		\filldraw  (0,0) circle (2pt);
        \filldraw  (0,.8) circle (2pt);
        \filldraw  (0,-.8) circle (2pt);
		
		\draw (0.8,-.4) -- (0.8,.4); 
        \draw (0,0) -- (0,.8); 
        \draw (0,0) -- (0,-.8); 
		\node[] at (-.2,.8){ $a$};
        \node[] at (-.2,-.8){$c$};
        \node[] at (-.2,0) { $b$};
        \node[] at (1,.5)  { $d$};
		\node[] at (1,-.5) { $e$};
		
	\end{tikzpicture} be denoted by $(a,b,c;d,e)$. There are $\binom{5}{2}\times 2=20$ possible values for $(d,e)$, and each value corresponds to $3!=6$ different patterns. So, we have $120$ POPs of type III.  The complement and reverse of $(a,b,c;d,e)$ are $(c,b,a;e,d)$ and $(6-a,6-b,6-c;6-d,6-e)$ respectively. Therefore, the POPs with $(d,e)=(5,4),(5,3),(5,2),(5,1)$, $(4,3),(4,2)$ are the reverse of the POPs with $(d,e)=(1,2),(1,3),(1,4),(1,5),(2,3),(2,4)$ respectively.  Therefore, it is enough to consider the POPs with $(d,e)=(1,2),(1,3),(1,4),(1,5)$, $(2,3),(2,4)$.

When $\{a,b,c\}=\{1,2,3\}$ and $\{d,e\}=\{4,5\}$ or $\{a,b,c\}=\{3,4,5\}$ and $\{d,e\}=\{1,2\}$, all POPs are Wilf-equivalent by Theorems \ref{thm: disjoint-sum-shape-w} and \ref{thm: disjoint-sum-wilf}.    
Under trivial bijections, the following POPs are in $18$ ($6$ from each) different equivalence classes.
    \begin{enumerate}
        \item  $\{a,b,c\}=\{1,2,4\}$ and $\{d,e\}=\{3,5\}$ or $\{a,b,c\}=\{2,4,5\}$ and $\{d,e\}=\{1,3\}$.
        \item $\{a,b,c\}=\{1,3,4\}$ and $\{d,e\}=\{2,5\}$ or $\{a,b,c\}=\{2,3,5\}$ and $\{d,e\}=\{1,4\}$.
        \item $\{a,b,c\}=\{1,2,5\}$ and $\{d,e\}=\{3,4\}$ or $\{a,b,c\}=\{1,4,5\}$ and $\{d,e\}=\{2,3\}$.
    \end{enumerate}
Similarly, under trivial bijections, the following POPs are in $8$ ($4$ from each) different Wilf-equivalence classes.
    \begin{enumerate}
        \item  $\{a,b,c\}=\{1,3,5\}$ and $\{d,e\}=\{2,4\}$.
        \item $\{a,b,c\}=\{2,3,4\}$ and $\{d,e\}=\{1,5\}$.
    \end{enumerate}
Therefore, there are $27$ Wilf-equivalence classes of type III. A detailed classification of type III POPs is provided in Table \ref{table5}.

\begin{table}[!htbp]
    \centering
\begin{tabular}{|l|l|l|l|l|l|l|l|l|l|}
\noalign{\hrule height 1pt}
\textbf{No. }&\textbf{POP of Type III} $(p)$ & $|\Av_n(p)|~(n=1,2,\ldots,8,\ldots)$  \\
\noalign{\hrule height 1pt}
 &	$(3,4,5;1,2)$ &  \\
1 & $(3,5,4;1,2)$ & $1,2,6,24, 110, 530, 2597 ,12796, \ldots$ \\
& $(5,3,4;1,2)$ &\\
\noalign{\hrule height 1.5pt}
 
2 & $(2,4,5;1,3)$ & $1,2,6,24,110,532,2629,13135,\ldots$ \\
\hline

3& $(2,5,4;1,3)$ & $1,2,6,24,110,532,2632,13188,\ldots$ \\
\hline
4& $(4,2,5;1,3)$ & $1,2,6,24,110,532,2628,13095,\ldots$ \\
\hline

5& $(5,4,2;1,3)$ & $1,2,6,24,110,533,2658,13527,\ldots$ \\
\hline

6& $(4,5,2;1,3)$ & $1,2,6,24,110,532,2638,13329,\ldots$ \\
\hline

7& $(5,2,4;1,3)$ & $1,2,6,24,110,533,2640,13195,\ldots$ \\

\noalign{\hrule height 1.5pt}

8& $(2,3,5;1,4)$ & $1,2,6,24,110,535,2679, 13632, \ldots$ \\
\hline 
9& $(2,5,3;1,4)$ & $1,2,6,24,110,535,2690, 13836, \ldots$ \\
\hline 

10 & $(3,2,5;1,4)$ & $1,2,6,24,110,531,2613,12974, \ldots$ \\
\hline 

11 & $(5,3,2;1,4)$ & $1,2,6,24,110,531,2601,12817,\ldots$ \\
\hline 

12 & $(3,5,2;1,4)$ & $1,2,6,24,110,531,2626,13192, \ldots$ \\
\hline 

13 & $(5,2,3;1,4)$ & $1,2,6,24,110,534,2666, 13534, \ldots$ \\

\noalign{\hrule height 1.5pt}

14 & $(2,3,4;1,5)$ & $1,2,6,24,110,536,2690, 13711, \ldots$ \\
\hline 

15 & $(2,4,3;1,5)$ & $1,2,6,24,110,530,2595,12759,\ldots$ \\
\hline 

16 & $(4,3,2;1,5)$ & $1,2,6,24,110,530,2564, 12190, \ldots$ \\
\hline 

17 & $(3,4,2;1,5)$ & $1,2,6,24,110,530,2575,12407,\ldots$ \\

\noalign{\hrule height 1.5pt}

18 & $(1,4,5;2,3)$ & $1,2,6,24,110,533,2663, 13637, \ldots$ \\
\hline 

19 & $(1,5,4;2,3)$ & $1,2,6,24,110,534,2678, 13748, \ldots$ \\
\hline 

20 & $(4,1,5;2,3)$ & $1,2,6,24,110,530,2607, 12997, \ldots$ \\
\hline 

21 & $(5,4,1;2,3)$ & $1,2,6,24,110,530,2605,  12996, \ldots$ \\
\hline 

22 & $(4,5,1;2,3)$ & $1,2,6,24,110,530,2617,13202, \ldots$ \\
\hline 

23 & $(5,1,4;2,3)$ & $1,2,6,24,110,533,2633, 13156, \ldots$ \\

\noalign{\hrule height 1.5pt}

24 & $(1,3,5;2,4)$ & $1,2,6,24,110,537,2727, 14261, \ldots$ \\
\hline

25 & $(1,5,3;2,4)$ & $1,2,6,24,110,533,2673,13757,\ldots$ \\
\hline

26 & $(5,3,1;2,4)$ & $1,2,6,24,110,533,2644, 13319, \ldots$ \\
\hline

27 & $(3,5,1;2,4)$ & $1,2,6,24,110,532,2628,13175,\ldots$ \\
\hline

\end{tabular}
\caption{Classification of Type III POPs of size $5$ }
\label{table5}
\end{table}

\subsection{Type IV}
Let the POP \begin{tikzpicture}[baseline=(current bounding box)]
		\filldraw  (0,-.4) circle (2pt); 
		\filldraw   (0,.4) circle (2pt);
		\filldraw  (.7,0.4) circle (2pt);
        \filldraw  (.7,-.4) circle (2pt);
        \filldraw  (1.4,0) circle (2pt);
		
		\draw (0,-.4) -- (0,.4); 
        \draw (.7,0) -- (.7,.4); 
        \draw (.7,0) -- (.7,-.4); 
		\node[] at (-.2,.5)  { $a$};
		\node[] at (-.2,-.5)  { $b$};
		\node[] at (.9,.5)  { $c$};
        \node[] at (.9,-.5)  { $d$};
        \node[] at (1.6,0)  { $e$};
		
	\end{tikzpicture} be denoted by $(a,b;c,d;e)$. There are $5$ possible values for $e$, and each value corresponds to $\binom{4}{2}\times2=12$ many values of $(a,b)$ and $\binom{2}{2}\times2=2$ many values of $(c,d)$. So, we have $120$ POPs of type IV. The complement and reverse of $(a,b;c,d;e)$ are $(b,a;d,c;e)$ and $(6-a,6-b;6-c,6-d;6-e)$ respectively. Therefore, POPs with $e=1,2$ are the reverse of POPs with $e=5,4$ respectively. As $(a,b;c,d;e)=(c,d;a,b;e)$, it is enough to consider the POPs with $e=3,4,5$ and $(a,b)=(1,2),(1,3),(1,4),(1,5)$. 

In Section \ref{len4_type3}, we proved that there are $5$ Wilf-equivalence classes of the shape \begin{tikzpicture}[baseline=(current bounding box)]
		\filldraw  (0,-.4) circle (2pt); 
		\filldraw   (0,.4) circle (2pt);
		\filldraw  (.7,0.4) circle (2pt);
        \filldraw  (.7,-.4) circle (2pt);
		
		\draw (0,-.4) -- (0,.4); 
        \draw (.7,0) -- (.7,.4); 
        \draw (.7,0) -- (.7,-.4); 
		\node[] at (-.2,.5)  { $a$};
		\node[] at (-.2,-.5) { $b$};
		\node[] at (.9,.5)    {$c$};
        \node[] at (.9,-.5)  { $d$};
		
	\end{tikzpicture}. By Theorem \ref{iso_ver_beginning_end}, there are $5$ Wilf-equivalence classes of   POPs $(a,b;c,d;e)$ where $e=1$ or $5$. By Theorem \ref{thm: disjoint-sum-wilf}, we see that$(1,2;3,4;5)\sim (1,2;4,5;3)$ and $(1,2;4,3;5)\sim (1,2;5,4;3)$. By Theorem \ref{thm: disjoint-sum-shape-w}, we get  $(1,2;3,5;4)\sim (1,2;5,3;4)$ . 
 The POPs $(1,4;2,5;3), (1,4;5,2;3)$, $(1,5;2,4;3)$, $(1,5;4,2;3)$, $(1,3;2,5;4)$, $(1,3;5,2;4)$, $(1,5;2,3;4)$, $(1,5;3,2;4)$ are in $8$ different Wilf-equivalence classes. 

So far, we have $14$ Wilf-equivalence classes. It is easy to check that the remaining POPs are trivially related to the POPs mentioned so far, giving us $14$ Wilf-equivalence classes of type IV. A detailed classification of type IV POPs is provided in Table \ref{table6}.

\begin{table}[!htbp]
    \centering
\begin{tabular}{|l|l|l|l|l|l|l|l|l|l|}
\noalign{\hrule height 1pt}
\textbf{No. }&\textbf{POP of Type IV} $(p)$ & $|\Av_n(p)|~(n=1,2,\ldots,8,\ldots)$  \\
\noalign{\hrule height 1pt}

 &	$(1,2;3,4;5)$ &   \\
1 &$(1,2;4,3;5)$ & $1,2,6,24, 90,300,910,2576, \ldots$\\
& $(1,2;4,5;3)$ &\\
 \hline
 
2 & $(1,3;4,2;5)$ & $1,2,6,24,90,312,1064, 3552,\ldots$  \\
\hline

3 & $(1,3;2,4;5)$& $1,2,6,24,90,312,1029, 3304,\ldots$ \\
 \hline

4 & $(1,4;3,2;5)$& $1, 2, 6, 24,90, 300, 938, 2864,  \ldots$ \\

\hline
5 & $(1,4;2,3;5)$ & $1,2,6, 24, 90, 318, 1092, 3680 \ldots$\\
\noalign{\hrule height 1.5pt}
6 & $(1,2;3,5;4)$ & $1,2,6,24, 90, 315, 1043, 3318,\ldots$ \\
 & $(1,2;5,3;4)$ &\\
 \hline
7 &$(1,3;2,5;4)$ & $1,2,6,24,90,311,1034,3401,\ldots$\\
\hline

8 &$(1,3;5,2;4)$ & $1,2,6,24,90,323,1129,3951,\ldots$\\
\hline

9 &$(1,5;2,3;4)$ & $1,2,6,24,90,310,1034,3458,\ldots$\\
\hline

10 &$(1,5;3,2;4)$ & $1,2,6,24,90,301,914,2768, \ldots$\\
\noalign{\hrule height 1.5pt}

11 &$(1,4;2,5;3)$ & $1,2,6,24,90,329,1192,4302,\ldots$\\
\hline

12 &$(1,4;5,2;3)$ & $1,2,6,24,90,310,1088,3888,\ldots$\\
\hline
13 & $(1,5;2,4;3)$ & $1,2,6,24, 90,334,1235,4567, \ldots$ \\
\hline
14 & $(1,5;4,2;3)$ & $1,2,6,24,90,316,1009,3210,\ldots$\\
\hline
\end{tabular}
\caption{Classification of Type IV POPs of size $5$}
\label{table6}
\end{table}

\bibliographystyle{acm}
\end{document}